\documentclass{psp}
\usepackage[utf8]{inputenc}
\usepackage[T1]{fontenc}

\usepackage{amsmath}
\usepackage{amssymb}
\usepackage{amscd}
\usepackage{amsfonts}
\usepackage{amsbsy}
\usepackage{graphicx}

\newcommand{\R}{\ensuremath{\mathbb{R}}}
\newcommand{\C}{\ensuremath{\mathbb{C}}}
\newcommand{\N}{\ensuremath{\mathbb{N}}}

\newtheorem {theorem} {Theorem} 
\newtheorem {proposition} [theorem] {Proposition}
\newtheorem {corollary} {Corollary}

\newtheorem {definition} {Definition}
\newtheorem {remark} {Remark}
\newtheorem {example} {Example}

\begin{document}

\title[Hilbert's 16th problem  and Nash space of arcs]
{Hilbert's 16th problem on a period annulus\\ and Nash space of arcs}

\author[J.-P. Fran\c coise, L. Gavrilov and D. Xiao]{JEAN-PIERRE FRAN\c COISE\\
Universit\'e P.-M. Curie, Paris 6, Laboratoire Jacques--Louis Lions \addressbreak
UMR 7598 CNRS, 4 Place Jussieu, 75252, Paris, France \addressbreak and School of
Mathematical Sciences, 
\addressbreak
Shanghai Jiao Tong University, Shanghai,
200240, PR China 
\addressbreak
  e-mail\textup{: \texttt{Jean-Pierre.Francoise@upmc.fr}}  
  \nextauthor
LUBOMIR GAVRILOV\\
Institut de Math\'ematiques de Toulouse; UMR5219 \addressbreak
Universit\'e de Toulouse; CNRS \addressbreak
UPS IMT, F-31062 Toulouse Cedex 9, France
\addressbreak
  e-mail\textup{: \texttt{lubomir.gavrilov@math.univ-toulouse.fr}}
  \and\
DONGMEI XIAO\\
School of Mathematical Sciences \addressbreak
Shanghai Jiao Tong University, Shanghai,
200240, PR China  \addressbreak
  e-mail\textup{: \texttt{xiaodm@sjtu.edu.cn}}}



\maketitle
\begin{abstract}
This article introduces an algebro-geometric setting for the space of bifurcation functions involved in the local Hilbert's 16th problem on a period annulus. Each possible bifurcation function is in one-to-one correspondence with a point in the exceptional divisor $E$ of the canonical blow-up  $B_I{\C}^n$ of the Bautin ideal $I$. In this setting, the notion of essential perturbation, first proposed by Iliev, is defined via irreducible components of the Nash space of arcs $ Arc(B_I\C^n,E)$. The example of planar quadratic vector fields in the Kapteyn normal form is further discussed.

\end{abstract}
\newpage
\tableofcontents

\section{Introduction}

In full generality, this article deals with bifurcation theory of polynomial planar vector fields $X_\lambda$ depending of a set of parameters $\lambda = (\lambda_1,...,\lambda_n)\in \Lambda$. We assume that the ``center set" of vector fields $X_\lambda$ having a center
is an affine algebraic variety defined by an ideal
 in the ring of polynomials in $\lambda$ (the so called  Bautin ideal associated to $X_\lambda$). 

In the history of bifurcation theory, many reductions of bifurcation problems have been made  ``by hand" and sometimes without the need of full justifications. 
It turns out  that in this context the Nash space of arcs/jets often provides the right setting.

We still remain to a very elementary level for specialists of algebraic geometry, although it seems interesting to propose here a first application of Nash space of arcs to complex/real foliations and bifurcation theory.

\subsection{Hilbert's 16th problem on a period annulus}
\label{openannulus}

 An open  period annulus $\Pi$ of polynomial planar vector field $X_{\lambda_0 }$ is a union of periodic orbits of $X_{\lambda_0 }$,
which is homeomorphic to the standard annulus $S^1\times (0,1)$, the image of each periodic orbit being a circle. We consider an {\it unfolding} $X_{{\lambda }}$ of $X_{\lambda_0 }$ which depends on finitely many parameters $\{{\lambda }=(\lambda_1,...\lambda_n)\in \Lambda\subset \R^n\}$, where $\Lambda$ belongs to a small ball centered at $\lambda_0$ in the parameter space $\R^n$. 
The (infinitesimal) 16th Hilbert problem on the period annulus $\Pi$ is to find an uniform bound in ${\lambda }$, on the number of limit cycles of $X_{{\lambda }}$, which tend to $\Pi$ as $\lambda$ tends to $\lambda_0$. The precise meaning of this is encoded in the notion of cyclicity $Cycl(\Pi,X_{\lambda_0 },X_{{\lambda }})$ , which we define below, see \ref{Cyclicity}. However, except in some particular cases it is not even known whether such a bound exists, e.g. \cite{Gavrilov3,gano08,bny10}.

The reader can think, as possible examples, to perturbation of a quadratic center by a quadratic planar vector field, 
which we revisit in section \ref{quadratic}.

Let $\Sigma$ be 
an open transversal cross-section to $X_{\lambda_0 }$ on the open set $\Pi$,  $\bar{\Sigma} \subset \Pi$. We further assume that $X_{\lambda_0 }$ is also transverse at the boundary points of $\overline{\Sigma}$.
For $\lambda$ close to $\lambda_0$, $X_{{\lambda }}$ remains transverse to $\Sigma$ and there is an analytic first return map 
${\mathcal P}_{{\lambda } }: \Sigma \times \Lambda \mapsto \Sigma', \overline{\Sigma}\subset \Sigma'$, with $\overline{\Sigma'}\subset\Pi$. The limit cycles of $X_{{\lambda }}$ are in one-to-one correspondence with the fixed points of  ${\mathcal P}_{{\lambda }}$ and hence with the zeros of the displacement function
 $$h \mapsto F(h,{\lambda } )= {\mathcal P}_{{\lambda }} (h)- h$$
 in its domain of definition. The coefficients $F_k({\lambda }), (k>1)$ of the analytic convergent series (in $h$, coordinate on $\Sigma$ so that $0\in\Sigma,h(0)=h_0$):
\begin{eqnarray}
\label{displacement}
\begin{aligned}
F(h,{\lambda })=\Sigma_{k=1}^{+\infty} F_k({\lambda })(h-h_0)^k,
\end{aligned}
\end{eqnarray}
are  analytic also in ${\lambda }$ in a neighbourhood of $\lambda_0$.

The infinitesimal 16th Hilbert problem on the period annulus $\Pi$ asks, alternatively,  to find a bound  on the number of fixed points of the first return map 
$h \mapsto {\mathcal P}_{{\lambda }}(h)$, which is uniform in  ${\lambda }$. In this context  ${\lambda }$ will belong to some sufficiently small neighbourhood of a given $\lambda_0$, which belongs to the center set.

 The problem which we consider should not be confused with the study of the displacement function on the closed period annulus $\bar{\Pi}$. In particular the study of $F(h,{\lambda })$  in a neighbourhood of a polycycle, or a slow-fast manifold is beyond the scope of the paper.

\subsection{Cyclicity}
\label{Cyclicity}
We follow \cite{rous98, gano08, Gavrilov}.  As in section \ref{openannulus}, consider 
 a family $\{  X_{{\lambda }} \}_{ {\lambda } \in \Lambda}$ of polynomial planar real vector fields which depend analytically on finitely many parameters
 $$\{{\lambda }=(\lambda_1,...\lambda_n)\in \Lambda\subset (\R^n,0)\}  $$
and let    $\Pi \subset \R^2$  be an open period annulus of $X_{\lambda_0 }$.
For an arbitrary compact set $K\subset \Pi$ we define its cyclicity $Cycl(K,X_{\lambda_0 },X_{{\lambda }})$  as the maximal number of limit cycles of the vector field $X_{{\lambda }}$, which tend to $K$ as ${{\lambda }}$ tends to $\lambda_0$. This allows to define the cyclicity of the open period annulus $\Pi$ as
\begin{align}
Cycl(\Pi,X_{\lambda_0 },X_{{\lambda }}) &= \underset{K\subset \Pi}{\sup} \{ Cycl(K,X_{\lambda_0 },X_{{\lambda }}) :  K \mbox{ is a compact} \}
\end{align}
\cite[Definition 3]{Gavrilov}.

The conjectural finiteness of the cyclicity of period annuli (closed or open) of polynomial vector fields is a largely open problem, inspired by the second part of the 16th Hilbert problem, see \cite[Roussarie, section 2.2]{rous98}. A localized version of Theorem 1 (around a single Hamiltonian cycle) was proved in \cite[Roussarie]{Rous}.Through this paper we assume that

\begin{align}
Cycl(\Pi,X_{\lambda_0 },X_{{\lambda }})& < \infty .
\label{finite}
\end{align}

\subsection{One-parameter unfoldings which maximize the cyclicity}
\begin{figure}
\includegraphics[width=10cm]{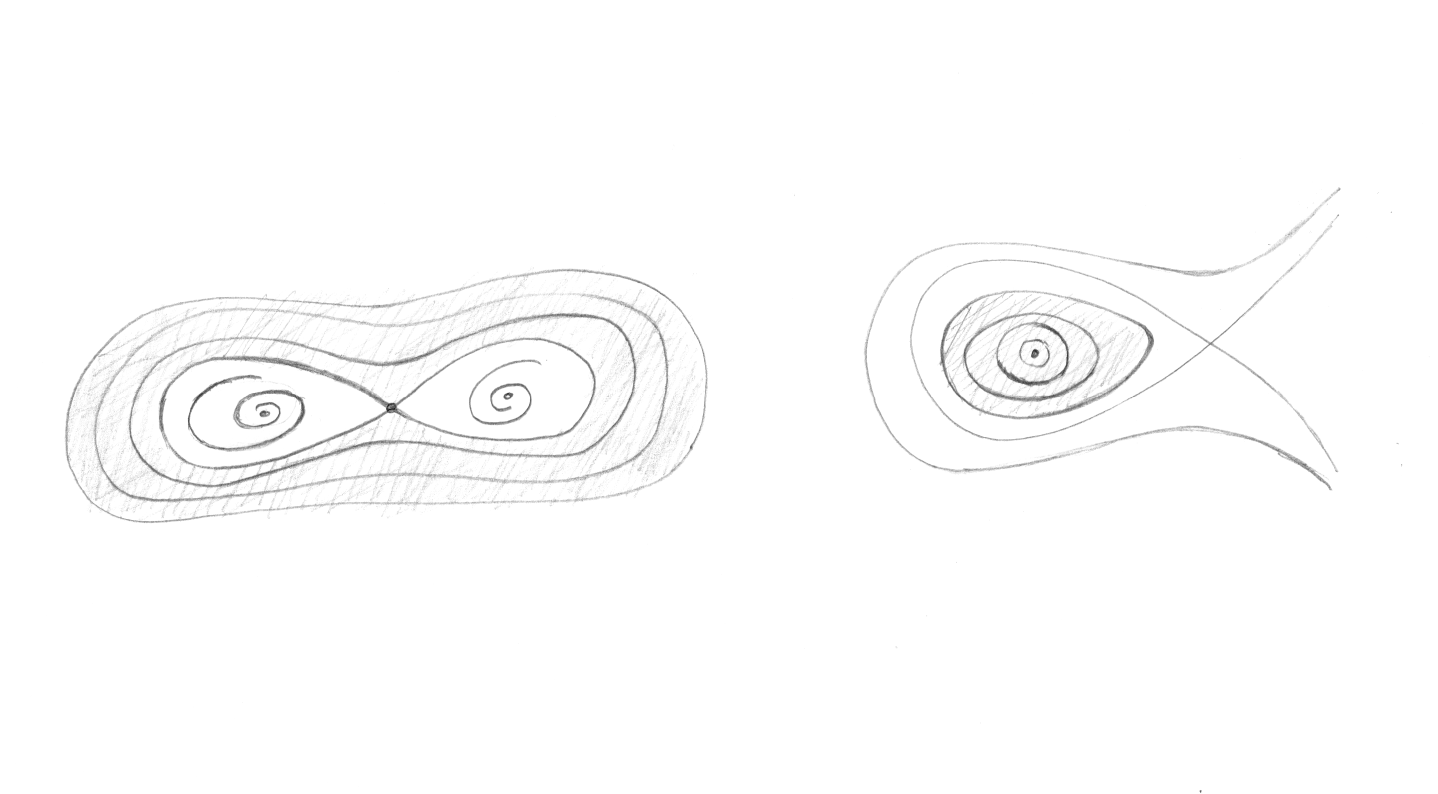}
\caption{Period annuli}
\label{annuli}
\end{figure}
Given an analytic family of vector fields  $\{  X_{{\lambda }} \}_{ {\lambda } \in \Lambda}$  we may consider germ of analytic arcs
\begin{equation}
\label{arce}
\varepsilon  \mapsto  {\lambda } (\varepsilon),{\lambda }  (0)=\lambda_0
\end{equation}
and the induced one-parameter families of vector fields $\{  X_{{\lambda }(\varepsilon)} \}$. Obviously we have
$$
Cycl(\Pi,X_{\lambda_0 },X_{{\lambda } (\varepsilon)}) \leq Cycl(\Pi,X_{\lambda_0 },X_{{\lambda }}) .
$$
At a first sight, it is restrictive to study only one-parameter deformations (arcs in the parameter space).
The following result shows that if we consider \emph{families } of one-parameter deformations (families of arcs  in the parameter space), then 
the two approaches give the same answer
\begin{theorem}[\cite{Gavrilov,Rous}]
\label{gav}
Under the finiteness condition (\ref{finite}),
there exists an analytic arc  (\ref{arce})
such that the equality holds
$$Cycl(\Pi,X_{\lambda_0 },X_{{\lambda } (\varepsilon)}) =  Cycl(\Pi,X_{\lambda_0 },X_{{\lambda }}) .$$
\end{theorem}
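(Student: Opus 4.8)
The plan is to build the arc by a single application of the curve selection lemma to a semianalytic set manufactured from the displacement function; resolving the Bautin ideal is not needed for this particular statement, although the set in question is essentially the base of a family of arcs on $B_I\C^n$, which foreshadows the rest of the paper.

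Set $C = Cycl(\Pi, X_{\lambda_0}, X_{\lambda})$, finite by (\ref{finite}); one may assume $C \ge 1$, the case $C=0$ being trivial since every arc then gives cyclicity $0$. First I would unwind the definition of cyclicity at $\lambda_0$. Because $Cycl(\Pi, X_{\lambda_0}, X_{\lambda})$ is a supremum of nonnegative integers all bounded by $C$, it is attained on some compact $K \subset \Pi$; hence there is a sequence $\lambda^{(i)} \to \lambda_0$ for which $X_{\lambda^{(i)}}$ has $C$ limit cycles $\gamma^{(i)}_1, \dots, \gamma^{(i)}_C$ tending to $K$. Each $\lambda^{(i)}$ lies outside the center set $V(I) = \{\lambda : F(\,\cdot\,,\lambda) \equiv 0 \text{ on } \Sigma\}$, a closed subvariety since the Bautin ideal $I$ is finitely generated, because a vector field having a limit cycle near $\Pi$ does not have $\Pi$ as a period annulus. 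Passing to a subsequence, I would check — this is the delicate bookkeeping, using $\overline{\Sigma}\subset \Pi$, transversality of $\Sigma$, and the fact that every orbit of $X_{\lambda_0}$ in $\Pi$ is periodic — that each $\gamma^{(i)}_j$ converges in Hausdorff distance to a periodic orbit $O_j \subset K$ of $X_{\lambda_0}$, meets $\Sigma$ in a single point $h^{(i)}_j$ for $i$ large, and $h^{(i)}_j \to h^0_j := O_j \cap \Sigma$. After relabelling, $h^{(i)}_1 < \dots < h^{(i)}_C$ are $C$ distinct zeros of $F(\,\cdot\,,\lambda^{(i)})$, and $h^0_1 \le \dots \le h^0_C$ lie in $K \cap \Sigma$.

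Next I would fix a compact neighbourhood $K'$ of $K \cap \Sigma$ in $\Sigma$ and form, near $\lambda_0$ in $\R^n \times \R^C$, the semianalytic set
\[
W = \bigl\{ (\lambda, h_1, \dots, h_C) :\ \lambda \notin V(I),\ h_1 < \dots < h_C,\ h_j \in K',\ F(h_j,\lambda)=0\ (1 \le j \le C) \bigr\},
\]
which is semianalytic because $F$ is real-analytic near $\overline{K'} \times \{\lambda_0\}$ and $V(I)$ is cut out there by finitely many generators of $I$. The previous paragraph shows $(\lambda_0, h^0_1, \dots, h^0_C) \in \overline{W}$, and this point is not in $W$ since $\lambda_0 \in V(I)$; so the curve selection lemma (\L ojasiewicz) furnishes a real-analytic arc $\varepsilon \mapsto (\lambda(\varepsilon), h_1(\varepsilon), \dots, h_C(\varepsilon))$, $\varepsilon \in [0,\delta)$, with $\lambda(0) = \lambda_0$ and the whole arc contained in $W$ for $\varepsilon > 0$. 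For such $\varepsilon$ we have $\lambda(\varepsilon) \notin V(I)$, hence $F(\,\cdot\,,\lambda(\varepsilon)) \not\equiv 0$ and its zeros are isolated; thus $h_1(\varepsilon) < \dots < h_C(\varepsilon)$ are the traces on $\Sigma$ of $C$ distinct limit cycles of $X_{\lambda(\varepsilon)}$, which as $\varepsilon \to 0$ tend to the compact set $K^* = \bigcup_j O_j \subset \Pi$. Therefore $Cycl(\Pi, X_{\lambda_0}, X_{\lambda(\varepsilon)}) \ge C$, and since the opposite inequality holds for every arc, equality follows and $\varepsilon \mapsto \lambda(\varepsilon)$ is the arc sought.

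The step I expect to cause the most trouble is the compactness argument of the second paragraph: passing from the loose phrasing "$C$ limit cycles tend to $K$" to convergent $C$-tuples of isolated zeros of the displacement function that remain in a fixed compact part of $\Sigma$, i.e. controlling the limit of a finite family of limit cycles of $X_{\lambda^{(i)}}$ accumulating on the period annulus. Everything after that is the curve selection lemma applied to an explicitly semianalytic set. Finally I would remark that lifting $\varepsilon \mapsto \lambda(\varepsilon)$ to $B_I\C^n$ and recording its endpoint on the exceptional divisor $E$ is precisely the "essential perturbation" viewpoint developed in the remainder of the article.
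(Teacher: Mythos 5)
Your overall mechanism is the one the paper itself uses in its final step: manufacture a set in $\Lambda\times\R^C$ from the displacement function and apply the curve selection lemma; the compactness bookkeeping of your second paragraph (limit cycles near a compact part of $\Pi$ meet $\Sigma$ exactly once, at fixed points of a near-identity return map, and converge to periodic orbits of $X_{\lambda_0}$) is standard and you flag it correctly. The genuine weak point is elsewhere, in the one line where you declare $W$ semianalytic because ``$V(I)$ is a closed subvariety since the Bautin ideal $I$ is finitely generated''. In the analytic setting of the theorem this is exactly the nontrivial issue: Noetherianity gives finite generation of the ideal of germs at $\lambda_0$ generated by the coefficients $F_k$, but the resulting identities $F_k=\sum_i a_{k,i}v_i$ hold only on neighbourhoods that may shrink as $k\to\infty$, so one cannot conclude that $\{\lambda : F(\cdot,\lambda)\equiv 0\}$ coincides, on a \emph{fixed} neighbourhood of $\lambda_0$, with the zero set of finitely many analytic functions; a priori it is only the countable intersection $\bigcap_k\{F_k=0\}$, and its (semi)analyticity is precisely what the two ingredients quoted in the paper's proof -- principalization of the ideal of the center set and a global Weierstrass preparation applied to $F$ -- are there to provide. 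So your parenthetical claim that resolving the Bautin ideal ``is not needed'' begs the question as written; it would be legitimate only under the paper's standing assumption (Section 3) that the Bautin ideal sheaf is polynomially generated on a fixed neighbourhood, an assumption Theorem \ref{gav} and Gavrilov's proof do not make.

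The gap is repairable without the heavy machinery, but you must change the formulation: replace the condition $\lambda\notin V(I)$ in $W$ by the bounded existential condition ``there exists $h\in\overline{K'}$ with $F(h,\lambda)\neq 0$'' (equivalent to $F(\cdot,\lambda)\not\equiv 0$ by analytic continuation along the connected cross-section), or directly require each $h_j$ to be an isolated zero of $F(\cdot,\lambda)$. Either version makes $W$ a \emph{subanalytic} set (a projection of a relatively compact semianalytic set, by Gabrielov's theorem), which is also what the paper obtains (``the complement to the bifurcation set of limit cycles is a sub-analytic subset of $\Lambda$''), and the curve selection lemma applies to subanalytic sets just as well. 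With that modification the rest of your argument -- closure membership via the sequence $(\lambda^{(i)},h^{(i)})$, isolatedness of the zeros $h_j(\varepsilon)$ for $\varepsilon>0$ and hence $C$ distinct limit cycles tending to $K^*$, and the trivial opposite inequality -- goes through and reproduces the theorem.
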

The proof relies on two ingredients, the principalization of the
ideal of the center set by blowup (cf. \cite{Hartshorne}, ch. II,
7.13 and see subsection 4.1) and a global version of the Weierstrass
preparation theorem, applied to the displacement map $F$. This shows
that the complement to the bifurcation set of limit cycles (isolated
zeros) is a sub-analytic subset of $\Lambda$. Applying the ``curve
selection lemma" we obtain the analytic arc  in question. \vskip 1pt

The main question addressed in our article is about how to construct all one-parameter deformations, or arcs in the parameter space. 
As far, as we are interested in cyclicity, it is clear that most of the one-parameter  deformations are redundant. To avoid redundancy, we shall consider only ``essential" deformations , and moreover we shall organise them in algebraic families of one-parameter deformations. The key observation is that to parametrize these families of arcs, we should use the associated bifurcation functions.

\subsection{The bifurcation function  of a one-parameter unfolding}
Consider an one-parameter analytic unfolding $X_\varepsilon$ of the vector field with a center $X_{ 0 }$, that is to say a perturbation of $X_0$. 
The displacement function associated to $X_\varepsilon$  can be developed in a power series in $\varepsilon$
\begin{eqnarray}
\label{displacement1}
F(h,\varepsilon)=\Sigma_{i = k}^\infty M_i(h) \varepsilon^i , M_k \neq 0 .
\end{eqnarray}
The leading term $M_k$  is called 
 the bifurcation function, or $k$-th order Melnikov function, associated to the unfolding $X_\varepsilon$ \cite{Iliev,F,Zoladek1}.
 Let $\Sigma$ be now a global cross-section of the period annulus $\Pi$ of $X_0$. The displacement function $F(h,\varepsilon)$ is defined on an open, relatively compact subset of $\Sigma$, depending on $\varepsilon$.
 An important feature of $M_k$ is that, in contrast to $F(.,\varepsilon)$, it can be defined on the whole open interval $\Sigma$ and it is analytic on it \cite{Gavrilov2}. 
 
Possible bifurcations of limit cycles from the ovals of $X_0$ correspond to zeros of the displacement function, and hence to zeros of  the bifurcation function $ M_k$ on $\Sigma$. 
 Thus, if $M_k$ is associated to an one-parameter unfolding, maximizing cyclicity of $\Pi$ with respect to $X_{\lambda }$ , then the zeros of $M_k$ on $\Sigma$ provide an upper bound to this cyclicity  $Cycl(\Pi,X_{\lambda_0 },X_{{\lambda }})$.  \emph{To solve the  infinitesimal 16th Hilbert problem on the open period annulus $\Pi$, amounts to study zeros of all bifurcation functions associated to arcs (\ref{arce}). 
} 
\subsection{One-parameter perturbations as arcs on singular varieties}
Given a perturbation $X_\varepsilon$ we associate a bifurcation function.
To avoid redundancy, we parameterize perturbations $X_\varepsilon$ by bifurcations functions, and ask for families of perturbations $X_\varepsilon$, which produce all possible bifurcation functions.
 Such remarkable families of perturbations (if they exist!) were called "essential" by Iliev, and studied in detail in the quadratic case \cite{Iliev}. 
Our approach fits into the Nash theory of arcs on singular varieties. A perturbation $X_\varepsilon$ becomes an arc on the  blow up of the Bautin ideal, related to the period annulus $\Pi$. The bifurcation functions are identified to the exceptional divisor of the blow up. The Iliev essential perturbations turn out to be special irreducible components of the associate Nash space of arcs. 

 \subsection{Plan of the paper.}
 The paper has three parts. 
 
  In  section \ref{section2} we describe some algebro-geometric background, needed to study the blow up of an ideal via the Nash theory of arcs.
   
   In section   \ref{section3}  we develop a dictionary between section  \ref{section2} (Nash space of arcs) and the problem, announced in the title of the paper : arcs are identified to one-parameter vector fields $X_\varepsilon$ (perturbations), bifurcations functions are identified to points on an exceptional divisor of blowup. As a byproduct we obtain finiteness results on the order of the bifurcation (or Melnikov) functions, as well a geometric description of the Iliev essential perturbations.
  
In the last section \ref{quadratic} we illustrate our approach on the family of plane quadratic vector fields in the so called Kapteyn normal form.
It tuns out that there are only five irreducible components of the Nash space, corresponding to essential perturbations.
  
\section{Blow-up of an ideal and its space of arcs}
\label{section2}
\subsection{Blow-up of an Ideal}

Let $I=(v_1,\dots,v_N) \subset \mathbb C
[{\lambda }]$, ${\lambda }=(\lambda_1,...\lambda_n)\in  \C^n$, be an ideal with zero set
$$Z(I)=\{{\lambda }\in \C^n : v_1({\lambda })=v_2({\lambda })= \dots = v_N({\lambda })=0\}.
$$
The blowup $B_I{\C}^n \subset  \C^n\times \mathbb P^{N-1}$ of $\C^n$
with center $I$ is the Zarisky closure
 of the graph of the map
\begin{equation}
\label{blowup}
\C^n \setminus Z \to \mathbb P^{N-1}, {\lambda }  \mapsto
[ v_1({\lambda }):\dots : v_N({\lambda })]
\end{equation}
with projection on the first factor $\pi : B_I{\C}^n \to \C^n $ .
Here $ [ v_1({\lambda }):\dots : v_N({\lambda })]$
is the projectivization of the vector $ (
v_1({\lambda }),\dots , v_N({\lambda }))$.

We say that  $\pi$ is the blow up map of  $\C^n$ with center at $I$,
and $E=\pi^{-1}(Z)$ is the exceptional locus. 
Here $Z$ and $E$ are algebraic varieties, which are not necessarily
smooth manifolds. For every $\lambda\in Z$ we denote by $E_\lambda =
\pi^{-1}(\lambda)$ the fibre of $E$ over $\lambda$. The fibre
$E_\lambda\subset  \mathbb P^{N-1}$ is a projective variety. 

The above construction is in fact local, and moreover depends only on the ideal $I$, not on the choice of generators $(v_1,...,v_N)$, see \cite[chapter II, 7]{Hartshorne}.
Therefore, we may replace the ideal $I$ by the ideal sheaf $\mathcal I$ generated by $I$  in the sheaf of rings of convergent power series $\mathcal O_{\C^n}$.
The blowup of the ideal sheaf $\mathcal I$ leads in a neighbourhood of a given point $\lambda \in Z(\mathcal I)$  to a variety   isomorphic to $ B_I{\C}^n$ in a neighbourhood of $\pi^{-1}(\lambda)$.
The main property of the blowup is the fact, that the restriction of the ideal sheaf $\mathcal I$ on it is principal. To be more precise, define the inverse  image ideal sheaf $\tilde{\mathcal I}$ of $\mathcal I$ by
$$
\tilde{ \mathcal I}  =  \pi^{-1} (\mathcal I) \cdot \mathcal O_{B_I{\C}^n} .
$$
which is roughly speaking the pre-image of $\mathcal I$ on the  surface $B_I{\C}^n$. Then
\begin{proposition}
The inverse  image ideal sheaf $\tilde{\mathcal I} $ on the blowup $B_I{\C}^n$ is a principal ideal sheaf.
\end{proposition}
The proof is essentially tautological  \cite[II, Proposition 7.13(a)]{Hartshorne}. 
A hint to the proof is provided by the next two basic examples.
\begin{example}
If $I=(\lambda_1,\lambda_2)$ then the  blowup $B_I{\C}^2$ is a smooth surface   covered by two charts 
$$U_1 = \{(\lambda_1,\lambda_2,\nu) \in \C^3:\lambda_1=\nu \lambda_2 \}, U_2 = \{(\lambda_1,\lambda_2,\mu) \in \C^3:\lambda_2=\mu \lambda_1 \}
$$
identified by the relation $\nu\mu=1$. The inverse image ideal sheaf of the ideal sheaf $\mathcal I$ on the surface $B_I{\C}^2 = U_1\cup U_2$ is principal. More precisely, on the chart
$U_1$ we have $(\lambda_1,\lambda_2) = (\lambda_2)$ (because $\lambda_1=\nu \lambda_2$) and on the chart
$U_2$ we have $(\lambda_1,\lambda_2) = (\lambda_1)$ (because $\lambda_2=\mu \lambda_1$) .
\end{example}
\begin{example}
\label{ex1}
For the ideal   $I=(\lambda_1,\lambda_2^{k+1})$, $k\geq 1$, the blowup $B_I\C^2$ is a  surface, 
covered by two charts
$$U_1 = \{(\lambda_1,\lambda_2,\nu) \in \C^3:\lambda_1=\nu \lambda_2^{k+1} \}, U_2 = \{(\lambda_1,\lambda_2,\mu) \in \C^3: \lambda_2^{k+1} =\mu \lambda_1 \}
$$
identified by the relation $\nu\mu=1$. 
 The inverse image ideal sheaf $ (\lambda_1,\lambda_2^{k+1})$ is principal on $B_I\C^2$. More precisely, on $U_1$ it is generated by $ \lambda_2^{k+1}$ and on $U_2$ it is generated by $\lambda_1$.
The fibre $E=E_0$ is  $\mathbb P^1$ but in contrast to the case $k=0$, the surface  $B_I\C^2 \subset \C^2 \times \mathbb P^1$ is singular at a single point $\lambda_1=\lambda_2=\mu=0$  : we get the singularity of type  $A_k$ 
$$
\{(\lambda_1,\lambda_2,\nu) \in \C^3 : \lambda_1 \mu  = \lambda_2^{k+1} \}
$$
which is the basic example in which the Nash space of arcs is easily computed, see \cite[p.36]{Nash} or \cite[Example 9]{joko16}.

\end{example} 

We resume the analytic counterpart of the above claims as follows. Let $\mathcal I_{\lambda_0}$ be the germ of ideal defined by $I$ at the point $\lambda_0 \in Z$, and $u_1',u_2',\dots , u'_{k'} \in \mathcal I_{\lambda_0}$ be a set of germs of analytic functions, which generate $\mathcal I_{\lambda_0}$ . 
 We may repeat the above construction to the graph of the map
\begin{align*}
U  &\to \mathbb P^{k'-1} \\
{\lambda }  &\mapsto
[ u_1'({\lambda }):u_2'({\lambda }): \dots : u_k'({\lambda })]
\end{align*}
where $U$ is a suitable neighbourhood of $ \lambda_0$,
by taking its closure $X' \subset U\times P^{k'-1}$ in complex topology. 
Similarly, if $u_1'',u_2'',\dots , u_{k''}''$ is another set of generators of $\mathcal I_{\lambda_0}$,  then we may construct the blowup $X'' \subset U\times P^{k''-1}$, provided that $U$ is a suitable neighbourhood of $\lambda_0$.

\begin{proposition}
\label{blowupp}
The blowups $X' $ and $X'' $ are analytic sets
and there is an analytic isomorphism $ f: X' \to X''$ such that
\begin{description}
\item[ (i) ] $f$ commutes with the projection maps $$ \pi' : X' \to U,\; \pi'': X'' \to U, \;  \pi'= \pi'' \circ f$$
\item[(ii)] $f$ induces a linear isomorphism between the fibres $$ (\pi')^{-1}(\lambda_0) \subset \mathbb P^{k'} \mbox{  and  }  (\pi'')^{-1}(\lambda_0) \subset \mathbb P^{k''}$$
\end{description}
\end{proposition}
\begin{proof}
Let
\begin{align}
\label{eq1}
u'=(u_1',u_2',\dots , u_{k'}') , u'' = (u_1'',u_2'',\dots , u_{k''}'')
\end{align}
be two sets of germs of analytic functions at $\lambda_0$ generating  $\mathcal I_{\lambda_0}$. There exist matrices
\begin{align}
\label{eq2}
a' = (a_{ij}') , a'' = (a_{ks}'')
\end{align}
with coefficients in  $\mathcal O_{\lambda_0}$ and such that
\begin{align}
\label{eq3}
u'' =  u'a' , u' = u'' a''
\end{align}
Let $\lambda= \lambda(\varepsilon)$ be an arc centered at $\lambda_0$, and
\begin{align}
\label{eq4}
u'(\lambda(\varepsilon)) = \varepsilon ^{k'} p' (1+ O(\varepsilon)),
u''(\lambda(\varepsilon)) = \varepsilon ^{k''} p'' (1+ O(\varepsilon))
\end{align}
where $p' , p''$ are non-zero vectors. It follows from (\ref{eq3}), (\ref{eq4}) that $k'=k''$ and
$$
p''= a'(\lambda_0) p', p' = a''(\lambda_0) p'' .
$$
\end{proof}

\subsection{The Nash space of arcs}
Suggested references to this section are  \cite{MLJ1, MLJ2, joko16, Nash}.  
Let $X$ be an algebraic variety (possibly singular). 
A formal arc $\alpha$
 is a parameterized formal curve
  \begin{equation}
  \varepsilon \to \alpha(\varepsilon) \in X .
   \end{equation}
   The set of $k$-jets of such arcs is an algebraic variety $X_k$, and there is a canonical projection $X_i \to X_j$ for $i\geq j$. The projective limit 
   $Arc(X) =\varprojlim X_i$ is therefore a proalgebraic variety, called the Nash space of arcs on $X$. 

Let $X_{sing}$ be the singular locus of $X$, or more generally, any algebraic subset of $X$.
A formal arc $\alpha$
  centered at  $X_{sing}$ is a parameterized formal curve
  \begin{equation}
  \label{nash}
  \varepsilon \to \alpha(\varepsilon) \in X,\;\; \alpha(0) \in X_{sing}
 \end{equation}
  which meets $X_{sing}$ at $\varepsilon=0$.
  The space of all such formal arcs is a  proalgebraic variety defined similarly, as a projective limit of $k$-jets of arcs, centered at $X_{sing}$. It is  denoted $Arc(X,X_{sing})$.

In this section we assume that $X=B_I\C^n$, and $E=\pi^{-1}(Z(I))$ is the exceptional locus of the blow-up. 

A general arc $\alpha \in Arc(B_I\C^n,E )$ is not contained in $E$, so it can be described by its projection on the $\lambda$-plane 
$\lambda(\varepsilon)= \pi (\alpha(\varepsilon))$ and vice versa. Of course, the   topology on the space of arcs $\pi (\alpha(\varepsilon))$ is the one, induced by the topology on the space of arcs on $B_I\C^n$.
The arc
$$
\varepsilon \to \lambda(\varepsilon)
$$
is a formal parameterized curve on $\C^n$, which meets the zero locus $Z(I)$ at $\varepsilon = 0$, and is not contained in $Z(I)$. The exceptional locus $E$ will be in general a complicated singular set, which can be studied by further desingularization of $B_I{\C}^n$. 
But is it possible to describe the geometry of $E$ without doing this? It turns out that, as suggested by Nash,  it is enough to study all arcs passing through a point $P\in E$.
\begin{proposition}
\label{exceptional} 
$P\in E $ if and only if there is an
analytic arc  
\begin{equation}\label{arc}
\C,0 \to \C^n: \varepsilon  \mapsto \lambda(\varepsilon)
\end{equation}
not contained in the zero set $Z=Z(I)$, such that $\lambda(0) \in
Z$ and
\begin{equation}
\label{admissible} P= \lim_{\varepsilon \to 0}   [
v_1(\lambda(\varepsilon)):\dots : v_N(\lambda(\varepsilon))] .
\end{equation}

\end{proposition}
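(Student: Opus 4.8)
The plan is to show that the blow-up map $\pi$ restricted to the complement of $E$ is, roughly speaking, the inclusion of a dense subset, and that points of $E$ are precisely the limits of such sequences along arcs. One direction is immediate from the construction: if an arc $\lambda(\varepsilon)$ with $\lambda(0)\in Z$ is not contained in $Z$, then for small $\varepsilon \neq 0$ the point $(\lambda(\varepsilon),[v_1(\lambda(\varepsilon)):\dots:v_N(\lambda(\varepsilon))])$ lies on the graph of the map \eqref{blowup}, hence in $B_I\C^n$; since $B_I\C^n$ is closed, the limit as $\varepsilon \to 0$ of this point also lies in $B_I\C^n$, and its first coordinate is $\lambda(0)\in Z$, so the limit lies over $Z$, i.e. in $E=\pi^{-1}(Z)$. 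Provided the limit \eqref{admissible} exists, this gives the ``if'' direction.

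For the ``only if'' direction, I would take $P\in E\subset \C^n\times \mathbb P^{N-1}$, say $P=(\lambda_0,[p_1:\dots:p_N])$ with $\lambda_0\in Z$. By definition $B_I\C^n$ is the Zariski closure of the graph $\Gamma$ of \eqref{blowup}; since over $\C$ the Zariski closure of a constructible set equals its Euclidean closure, $P$ is a Euclidean limit of a sequence of points $(\lambda_m, [v(\lambda_m)]) \in \Gamma$ with $\lambda_m \notin Z$ and $\lambda_m \to \lambda_0$. The task is to upgrade this sequence to a genuine analytic arc. This is exactly the place to invoke the curve selection lemma (in its complex-analytic/algebraic form): the set $\Gamma \setminus E$ is a constructible (locally closed algebraic) set whose closure contains $P$, so there is an analytic arc $\varepsilon \mapsto \gamma(\varepsilon)$ with $\gamma(0)=P$ and $\gamma(\varepsilon)\in \Gamma$ for $\varepsilon\neq 0$ small. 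Projecting, $\lambda(\varepsilon)=\pi(\gamma(\varepsilon))$ is an analytic arc with $\lambda(0)=\lambda_0\in Z$, $\lambda(\varepsilon)\notin Z$ for $\varepsilon\neq 0$, and by continuity of $\gamma$ and the fact that $\gamma(\varepsilon)=(\lambda(\varepsilon),[v_1(\lambda(\varepsilon)):\dots:v_N(\lambda(\varepsilon))])$ on the graph, we get $P = \lim_{\varepsilon\to 0}[v_1(\lambda(\varepsilon)):\dots:v_N(\lambda(\varepsilon))]$, which is \eqref{admissible}.

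The main obstacle, and the point that needs care, is the existence of the limit in \eqref{admissible} along an arbitrary arc in the ``if'' direction, and conversely making sure the arc produced in the ``only if'' direction is genuinely \emph{analytic} (convergent), not merely formal. For the former, one argues that for an analytic arc not contained in $Z$, each $v_i(\lambda(\varepsilon))$ is an analytic function of $\varepsilon$ vanishing to some finite order, so writing $v_i(\lambda(\varepsilon)) = \varepsilon^{k}(c_i + O(\varepsilon))$ with $k = \min_i \mathrm{ord}_\varepsilon\, v_i(\lambda(\varepsilon))$ and $(c_1,\dots,c_N)\neq 0$, the projective point stabilizes to $[c_1:\dots:c_N]$ as $\varepsilon\to 0$; this is the same normalization already used in the proof of Proposition \ref{blowup}, cf. \eqref{eq4}. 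For the latter, the curve selection lemma applies to real or complex algebraic (or subanalytic) sets and yields a real-analytic, resp. complex-analytic, arc, which is what is needed. One should also remark that both the statement and the argument are local at $\lambda_0$, so it is harmless to replace $I$ by the germ of ideal $\mathcal I_{\lambda_0}$ and use Proposition \ref{blowup} to work with any convenient set of generators.
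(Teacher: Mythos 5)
Your proof is correct, but the essential direction is argued along a genuinely different route than the paper's. For $P\in E$ the paper passes to a resolution $R_I\C^n \stackrel{\tilde\pi}{\to} B_I\C^n \stackrel{\pi}{\to} \C^n$: it lifts $P$ to a point $\tilde P$ of the smooth model, takes any analytic arc through $\tilde P$ not contained in the divisor $\tilde\pi^{-1}(E)$ (trivial on a smooth variety, e.g.\ a general line in local coordinates), and projects it down to obtain $\lambda(\varepsilon)$. You instead work directly on $B_I\C^n$: you use that for a constructible set over $\C$ the Zariski and Euclidean closures agree, so $P$ lies in the Euclidean closure of the graph $\Gamma$, and then invoke a curve selection lemma to produce an analytic arc in $B_I\C^n$ centered at $P$ and meeting $\Gamma$ for $\varepsilon\neq 0$; projecting gives the arc and, by continuity along the graph, the limit (\ref{admissible}). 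Both are legitimate. Your route is lighter for this proposition taken in isolation, since the complex curve selection statement can be obtained by cutting with generic hyperplanes through $P$ and normalizing the resulting curve, with no appeal to Hironaka; the paper's detour through $R_I\C^n$ is not wasted, however, because the same resolution and the local monomial generation of the pulled-back ideal sheaf are reused immediately in the proof of Theorem \ref{finitek} to get the uniform bound on the orders $k_P$, which curve selection alone does not give. One point to make precise: the semialgebraic/subanalytic curve selection lemma only yields a real-analytic arc, whereas (\ref{arc}) requires a complex-analytic arc $\C,0\to\C^n$; you correctly note that the complex-analytic form (local parameterization/normalization) is what is needed, but this should be supported by a reference rather than asserted. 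Your treatment of the ``if'' direction and of the existence of the limit via the order normalization (as in (\ref{eq4}), (\ref{vk})) matches what the paper leaves implicit and is fine.
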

	\begin{proof}
 Let $P\in E$ and consider a
resolution of $B_I\C^n$ :
		\begin{equation}
\label{resolution} R_I{\C}^n \stackrel{\tilde{\pi}}{\to }  B_I{\C}^n
\stackrel{\pi}{\to} \C^n
		\end{equation}
By this we mean that  $R_I{\C}^n$ is a smooth variety,  and the
projection $\tilde \pi$  is a bi-rational morphism, which is
bijective over the complement $B_I\C^n \setminus E$. Let
$\tilde{P} \in \tilde{\pi}^{-1}(P)$ be some pre-image of $P$ on $R_I{\C}^n$.
As the latter is smooth, then there exists an arc
 $\tilde{\alpha} : \C,0  \to R_I{\C}^n$   with $\tilde{\alpha}(0)= \tilde{P}$, not contained in the divisor $\tilde{\pi}^{-1}(E)$.
 Then the projection of the arc $\tilde{\alpha}$ on $B_I{\C}^n$ is an analytic arc $\alpha$ which meets $E$ at $P$, and 
 the projection $\pi(\alpha(\varepsilon))= \lambda(\varepsilon)$ is an analytic arc on $\C^n$ with $\lambda(0)= \pi(\alpha(0)) \in Z$.
	\end{proof}

The existence of the limit (\ref{admissible}) is equivalent to the
existence of a natural number $k\geq 1$ such that
\begin{equation}
\label{vk} (v_1(\lambda(\varepsilon)),\dots ,
v_N(\lambda(\varepsilon)))= \varepsilon^k (1+ O(\varepsilon)) p
\end{equation}
where      $p\in \C^N$ is a non-zero vector whose  projectivization
is the point $P$. The construction of $k$ is local, so we could replace the generators $v_1,v_2, \dots, v_N$ by their localizations at $\lambda_0$ in  $\mathcal I_{\lambda_0}$. We can even replace $v_1,v_2, \dots, v_N$ by another set of generators of the localized ideal  $\mathcal I_{\lambda_0}$. It follows from the proof of Proposition \ref{blowupp}
that
\begin{corollary}
\label{orderk}
Given an arc $\alpha \in Arc(B_I{\C}^n, E_{\lambda_0})$ the number $k$ defined in (\ref{vk}) does not depend on the choice of generators of the germ of ideal   $\mathcal I_{\lambda_0}$ defined by $I$ at the point $\lambda_0$.
\end{corollary}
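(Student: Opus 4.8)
The plan is to reread the proof of Proposition \ref{blowup} as a statement about a single fixed arc rather than about the blow-up varieties as a whole. First I would fix two finite generating sets of the germ of ideal $\mathcal I_{\lambda_0}$: the localization $v=(v_1,\dots,v_N)$ of the original generators, and an arbitrary other set $w=(w_1,\dots,w_M)$, both written as row vectors with entries in the local ring $\mathcal O_{\lambda_0}$ of germs of convergent power series at $\lambda_0$. Since the two sets generate the same ideal of $\mathcal O_{\lambda_0}$, there exist matrices $a=(a_{ij})$ of size $N\times M$ and $b=(b_{ks})$ of size $M\times N$, with entries in $\mathcal O_{\lambda_0}$, such that $w=va$ and $v=wb$; this is exactly formulas (\ref{eq2})--(\ref{eq3}) from the proof of Proposition \ref{blowup}, except that the two sets need not have the same cardinality, so $a$ and $b$ are rectangular. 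This step, and only this step, uses that $\mathcal I_{\lambda_0}$ lives in the ring of convergent power series.

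Next I would substitute the given arc $\varepsilon\mapsto\lambda(\varepsilon)$, $\lambda(0)=\lambda_0$, not contained in $Z$, into both generating sets. By the discussion leading to (\ref{vk}) --- equivalently, by applying Proposition \ref{exceptional} to $w$ in place of $v$ --- the vector $w(\lambda(\varepsilon))$ is a nonzero formal power series in $\varepsilon$, hence has a finite order $l$ and a nonzero leading coefficient vector $q$, i.e.\ $w(\lambda(\varepsilon))=\varepsilon^{l}q\,(1+O(\varepsilon))$; likewise $v(\lambda(\varepsilon))=\varepsilon^{k}p\,(1+O(\varepsilon))$ with $p\neq 0$, as in (\ref{vk}). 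Composing the arc with the analytic matrices gives $a(\lambda(\varepsilon))=a(\lambda_0)+O(\varepsilon)$ and $b(\lambda(\varepsilon))=b(\lambda_0)+O(\varepsilon)$. From $w=va$ we then obtain $w(\lambda(\varepsilon))=\varepsilon^{k}\bigl(p\,a(\lambda_0)+O(\varepsilon)\bigr)$, so $l\geq k$; symmetrically, from $v=wb$, one gets $k\geq l$. Hence $k=l$, which is the assertion. As a byproduct one also reads off $q=p\,a(\lambda_0)$ and $p=q\,b(\lambda_0)$, which recovers part (ii) of Proposition \ref{blowup}.

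I do not anticipate a genuine obstacle: the proof is essentially a transcription of the proof of Proposition \ref{blowup}. The only points needing a word of care are that the change-of-generators matrices have \emph{convergent}, not merely formal, entries --- so that composing them with the analytic arc produces honest power series in $\varepsilon$ and the leading-order comparison is legitimate --- and that the two generating sets may have different cardinalities, which is absorbed simply by allowing $a$ and $b$ to be rectangular. Consequently $k$ takes the same value for every set of generators of $\mathcal I_{\lambda_0}$, so it is a well-defined invariant attached to the arc $\alpha$, which is the content of Corollary \ref{orderk}.
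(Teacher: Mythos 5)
Your proposal is correct and follows essentially the same route as the paper: the paper derives Corollary \ref{orderk} directly from the proof of Proposition \ref{blowup}, i.e.\ from the change-of-generators relations $u''=u'a'$, $u'=u''a''$ with analytic matrix entries, substitution of the arc, and comparison of leading orders in $\varepsilon$ to get $k'=k''$ and the linear relation between the leading vectors. Your added remarks (rectangular matrices for generating sets of different cardinalities, convergence of the matrix entries) are just careful spellings-out of what the paper leaves implicit.
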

\begin{definition}[Iliev numbers]
\label{def1}
Given an arc $\alpha \in Arc(B_I{\C}^n, E)$, we call $k=k(\alpha)$ defined in   (\ref{vk}), the order of $\alpha$ at the center $P= \alpha(0)$.
Let $P\in E$ be fixed, and let
$k_P$ be the minimal order which an arc $\alpha$ centred at $P$, can have
 $$
 k_P=\min_{\alpha(0)=P}  k(\alpha) .
 $$
For a given fixed $\lambda_*\in Z(I)$  define further
\begin{equation}
\label{k*} k_*= \sup_{\pi(P)=\lambda_*} k_P 
\end{equation}
and  
\begin{equation}
\label{kk} k_{max}= \sup_{P\in E} k_P = \sup_{\lambda_*\in Z} k_*
\end{equation}
\end{definition}

Such numbers are further called Iliev numbers. The next result  says that all these numbers are
finite
\begin{theorem}
\label{finitek}
$$
k_{max} < \infty .
$$
\end{theorem}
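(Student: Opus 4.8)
The plan is to reduce the statement to a boundedness result that follows from the coherence of the inverse image ideal sheaf after a further resolution. First I would invoke a resolution of singularities $\tilde\pi: R_I\C^n \to B_I\C^n$ as in (\ref{resolution}), so that $R_I\C^n$ is smooth and the total transform $D = \tilde\pi^{-1}(E)$ is a divisor with simple normal crossings. On $R_I\C^n$ the pullback of the ideal sheaf $\mathcal I$ is invertible (this is the defining property of the blow-up, preserved under the further pullback), hence it is the ideal sheaf of an effective divisor $\sum_j a_j D_j$ supported on the components $D_j$ of $D$, with multiplicities $a_j \in \N$. Since $R_I\C^n$ is quasi-compact (or working locally over a compact piece of $Z(I)$, which suffices by (\ref{kk})), there are finitely many such components and the multiplicities $a_j$ are bounded by some $A < \infty$.

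The key step is then to translate the order $k(\alpha)$ of an arc into these divisorial multiplicities. Given $P \in E$ and an arc $\alpha$ centred at $P$ with $\lambda(\varepsilon) = \pi(\alpha(\varepsilon))$, by Proposition \ref{exceptional} and its proof I may lift $\alpha$ to an arc $\tilde\alpha$ on $R_I\C^n$ not contained in $D$, with $\tilde\alpha(0) = \tilde P$ lying on some subset of components $D_{j_1}, \dots, D_{j_m}$ of $D$. Pulling back (\ref{vk}) along $\tilde\pi$, the order $k$ equals the vanishing order at $\varepsilon = 0$ of $\tilde\pi^*(v_i)\circ\tilde\alpha$ for the generic $i$, which is exactly $\sum_{\ell} a_{j_\ell}\,\mathrm{ord}_{\varepsilon}\big(\text{local equation of }D_{j_\ell}\ \text{along}\ \tilde\alpha\big)$. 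To make $k_P$ minimal we choose the arc $\tilde\alpha$ transverse to $D$ at $\tilde P$ — possible since $R_I\C^n$ is smooth and $D$ is SNC — so that each $\mathrm{ord}_\varepsilon$ factor equals $1$, giving $k_P \le \sum_\ell a_{j_\ell} \le nA$ (at most $n$ components meet at a point of an SNC divisor on an $n$-fold, since they are transverse). Taking the supremum over $P \in E$ yields $k_{max} \le nA < \infty$, which is (\ref{kk}).

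The main obstacle is the bookkeeping in the second step: one must be careful that the vanishing order in (\ref{vk}) is governed by the \emph{generic} generator $v_i$ (equivalently, by the invertible pullback ideal, not by a single possibly-degenerate $v_i$), and that Corollary \ref{orderk} licenses us to compute $k$ after passing to $R_I\C^n$ and using the divisorial description there rather than the original generators. A secondary point requiring care is that $R_I\C^n$ need not be compact, so strictly one argues locally: fix $\lambda_* \in Z(I)$, work over a compact neighbourhood in $Z(I)$, bound $k_*$ there, and then observe the bound is uniform because the relevant data (number of components of $D$ through points over $Z(I)$, and their multiplicities) is constructible and hence bounded on the compact $Z(I)$. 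Modulo these points, the argument is a standard application of embedded resolution and the valuative interpretation of the order function.
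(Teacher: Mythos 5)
Your proposal is correct and follows essentially the same route as the paper: pass to a strong resolution on which the pulled-back ideal sheaf is locally principal with simple normal crossing (monomial) zero locus, bound the multiplicities using the finitely many components of $\tilde\pi^{-1}(E)$, and realize the bound for each $P\in E$ by a general (transverse) arc through a preimage $\tilde P$, whose projection gives an arc of bounded order centred at $P$. The only cosmetic difference is that you phrase the bound via the divisor $\sum_j a_j D_j$ (getting $k_P\le nA$) while the paper works with the local monomial generators $\prod_i z_i^{c_i}$ and the quantity $\mbox{max-ord}$.
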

\begin{proof}
Let us suppose that (\ref{resolution}) is a strong resolution, in
the sense  that $\tilde{\pi}^{-1}(E)$ is a divisor with simple
normal crossing. The inverse image ideal sheaf $$\tilde{I} = (\pi
\circ \tilde{\pi})^* I$$ is locally principal and locally monomial,
and its zero locus is just $\tilde{\pi}^{-1}(E)$. Thus in a
neighborhood of each point $\tilde{P} \in \tilde{\pi}^{-1}(E)$ we
can find local coordinates $z_i$ and natural numbers $c_i$, such
that the ideal sheaf $\tilde{I}$ is generated by $ \Pi_i z_i^{c_i} .
$ We define the order of vanishing, or order of the locally
principal ideal sheaf $\tilde{I} =(\pi \circ \tilde{\pi})^* I$ at
$\tilde{P}$  to be
$$ord_{\tilde{P}} \tilde{I} =\sum_i c_i .$$
As $\tilde{\pi}^{-1}(E) $ is a subvariety of $R_I\C^n$, then it has
a finite number of irreducible components, locally defined by
$z_i^{c_i}=0$. It follows that the number
$$
\mbox{max-ord}:=\max \{ ord_{\tilde{P}} \tilde{I} : \tilde{P} \in
\tilde{\pi}^{-1} (E) \}
$$
is finite.

Let $P\in E$ and $\tilde{P}$ be a pre-image of $P$ under
$\tilde{\pi}$ as in the proof of Proposition \ref{exceptional}.
Consider an arc $\tilde{\alpha}$ which coincides with a general
straight line through $\tilde{P}$ in local coordinates $z_i$. The
local principality of $\tilde{I}$ implies that
\begin{equation}
\label{vkt} (\tilde{v}_1(\tilde{\alpha}(\varepsilon)),\dots ,
\tilde{v}_N(\tilde{\alpha}(\varepsilon)))= \varepsilon^k (1+
O(\varepsilon)) p
\end{equation}
where $k= \sum_i c_i \leq  \mbox{max-ord}$, $v_i \circ \pi \circ
\tilde{\pi}= \tilde{v}$, and $p$ is a non-zero vector. The
projection of the arc $\tilde{\alpha}$ under $ \pi \circ
\tilde{\pi}$ on $\C^n$ gives an analytic arc $\varepsilon\to \lambda(\varepsilon)$, such that
$$(v_1(\lambda(\varepsilon)),\dots , v_N(\lambda(\varepsilon)))= \varepsilon^k (1+ O(\varepsilon)) p $$
(with the same $k$ as in (\ref{vk})) and $P$ is the
projectivization of $p$. The number $\mbox{max-ord}$ is therefore an
upper bound for the number $k_P$. As $\mbox{max-ord}$ does not
depend on $P$, then the finiteness of the Iliev numbers is proved.
\end{proof}
\begin{definition}
\label{def2}
We define $\mathcal M_k \subset  Arc(B_I\C^n,E)  
$ to be the set of arcs  of order at most $k$, that is to say
\begin{equation*}
 (v_1(\lambda(\varepsilon)),\dots ,
v_N(\lambda(\varepsilon)))= \varepsilon^i (1+ O(\varepsilon)) p_\alpha
\end{equation*}
where $i \leq k$ and $p_\alpha\in \C^N$ is a non-zero vector. 
\end{definition}
We get therefore a filtration $\mathcal M_1 \subset \mathcal M_2 \subset \dots \mathcal M_k \subset \dots$
of $Arc(B_I\C^n,E) $

\begin{proposition}
\label{mk}
The closure of 
 $\mathcal M_k$ is a union of  irreducible components of the arc space $Arc(B_I\C^n,E)$.
\end{proposition}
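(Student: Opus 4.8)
The plan is to prove that $\mathcal M_k$ is an \emph{open} subset of $Arc(B_I\C^n,E)$; granting this, the statement is immediate from the general fact that the closure of an open set in the Nash space of arcs is the union of the irreducible components which it meets.

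The first step is to reinterpret the order $k(\alpha)$ of an arc as the intersection multiplicity of $\alpha$ with the exceptional divisor. By the fundamental property of the blow-up (\cite[II, 7.13]{Hartshorne}) the inverse image ideal sheaf $\pi^{-1}I\cdot\mathcal O_X$ on $X:=B_I\C^n$ is invertible, so $E$ is an effective Cartier divisor; cover $X$ by charts $U$ on which $E$ has a single local equation $s_U$, the $s_U$ differing by invertible functions on overlaps. Writing $v_i\circ\pi=s_U\,u_i$ on $U$, where $u_1,\dots,u_N$ generate the unit ideal along $U\cap E$, one gets for any arc $\alpha$ with center in $U$ and not contained in $E$
\[
\big(v_1(\pi\alpha(\varepsilon)),\dots,v_N(\pi\alpha(\varepsilon))\big)=s_U(\alpha(\varepsilon))\cdot\big(u_1(\alpha(\varepsilon)),\dots,u_N(\alpha(\varepsilon))\big),
\]
and since $(u_1(\alpha(0)),\dots,u_N(\alpha(0)))\neq 0$, the integer $k=k(\alpha)$ of (\ref{vk}) is exactly $\mathrm{ord}_\varepsilon s_U(\alpha(\varepsilon))$. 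In particular this is independent of the chart and of the chosen generators (as already recorded in Proposition \ref{blowup} and Corollary \ref{orderk}), and it is convenient to set $k(\alpha)=+\infty$ for arcs contained in $E$; then $\mathcal M_k=\{\alpha:k(\alpha)\le k\}$, and this set contains no arc lying in $E$.

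Next I would check that $\alpha\mapsto k(\alpha)$ is upper semicontinuous on $Arc(B_I\C^n,E)$. The condition $k(\alpha)\ge k+1$ says that the coefficients of $\varepsilon^0,\dots,\varepsilon^k$ in the power series $s_U(\alpha(\varepsilon))$ vanish, and these coefficients are polynomials in the coordinates of the $k$-jet of $\alpha$; hence over each chart $U$ the set $\{k(\alpha)\ge k+1\}$ is the preimage of a Zariski closed set under the truncation morphism from $Arc(B_I\C^n,E)$ to the space of $k$-jets. Since being closed is local, $\{k(\alpha)\ge k+1\}$ is closed in $Arc(B_I\C^n,E)$ and $\mathcal M_k$ is open. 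One can reach the same conclusion by pulling everything back to a strong resolution as in the proof of Theorem \ref{finitek}: on $R_I\C^n$ the ideal sheaf $(\pi\circ\tilde\pi)^*I$ is monomial along the simple normal crossing divisor $\tilde\pi^{-1}(E)=\bigcup_jD_j$ with multiplicities $N_j$, arcs not contained in $E$ lift uniquely, $k$ becomes $\sum_jN_j\,\mathrm{ord}_{D_j}$, and each $\mathrm{ord}_{D_j}$ is obviously upper semicontinuous.

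Finally, if $C$ is an irreducible component of $Arc(B_I\C^n,E)$ that meets $\mathcal M_k$, then $C\cap\mathcal M_k$ is a nonempty open subset of the irreducible set $C$, hence dense, so $C\subseteq\overline{\mathcal M_k}$; and since the arc space is covered by its (finitely many) irreducible components, $\overline{\mathcal M_k}$ is precisely the union of those components that meet $\mathcal M_k$. The only point that is not entirely formal is this last, purely topological, statement about the proalgebraic space $Arc(B_I\C^n,E)$ — that it is covered by its irreducible components and that the closure of an open set is the union of the components it meets — which belongs to the standard formalism of the Nash space of arcs (\cite{Nash,joko16,MLJ1,MLJ2}); the rest follows as soon as $E$ is recognized as a Cartier divisor and $k(\alpha)$ as intersection multiplicity with it.
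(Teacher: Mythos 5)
Your proof is correct and follows essentially the same route as the paper: the order $k(\alpha)$ is upper semicontinuous (the paper phrases this as ``the order cannot increase under a continuous deformation''), hence $\mathcal M_k$ is open in $Arc(B_I\C^n,E)$, and the closure of an open set is the union of the irreducible components it meets. You only make the semicontinuity step more explicit -- via the local Cartier equation $s_U$ of $E$ and jet truncations, or via a strong resolution -- where the paper deduces it directly from the continuity of $\varepsilon\mapsto (v_1(\lambda(\varepsilon)),\dots,v_N(\lambda(\varepsilon)))$ under deformations of the arc.
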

\begin{proof}
Given an arc  $\alpha$ with projection $\pi(\alpha) : \varepsilon \mapsto \lambda(\varepsilon)$ 
we note that a continuous (in the Nash topology on  $Arc(B_I\C^n,E)$) deformation $s \to \alpha_s$ of $\alpha$ induces a continuous deformation of the projection
$ \varepsilon \mapsto \lambda(\varepsilon)$ and therefore a continuous deformation of
 $\varepsilon \mapsto v(\varepsilon)=(v_1(\lambda(\varepsilon)),v_2(\lambda(\varepsilon)),\dots, v_N(\lambda(\varepsilon)))$. Under such a deformation the order can not increase.
This shows, that for sufficiently small $s$ the arc $\alpha_s$ still belongs to  $\mathcal M_k$, and hence the irreducible component of  $Arc(B_I\C^n,E)$ containing $\alpha_0$ belongs to  $\mathcal M_k$ too.
\end{proof}
The claim of the above Proposition can be reformulated as follows
\begin{proposition}
The order 
$$
Arc(B_I\C^n,E) \to \N : \alpha \mapsto k(\alpha)
$$
is an upper semi-continuous function on $Arc(B_I\C^n,E) $.
\end{proposition}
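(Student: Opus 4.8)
\emph{The plan.} Since $\N$ carries the order topology, upper semi-continuity of $\alpha\mapsto k(\alpha)$ means precisely that for every $k_0\in\N$ the superlevel set $\{\alpha : k(\alpha)\ge k_0+1\}$ is closed, equivalently that $\mathcal M_{k_0}=\{\alpha:k(\alpha)\le k_0\}$ is open. I would prove this by exhibiting each superlevel set as the common zero locus of finitely many functions on the proalgebraic variety $Arc(B_I\C^n,E)$ that factor through a finite-level jet scheme, so that closedness is automatic. In this form the statement is just the other face of Proposition \ref{mk}: the deformation argument in its proof is exactly the assertion that a small deformation of an arc cannot raise its order.

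\emph{Key steps.} First, reduce to the base $\C^n$: by formula (\ref{vk}) the integer $k(\alpha)$ depends only on the projected arc $\lambda(\varepsilon)=\pi(\alpha(\varepsilon))$, and $\pi:B_I\C^n\to\C^n$ being a morphism induces a continuous map of arc spaces $Arc(B_I\C^n,E)\to Arc(\C^n,Z(I))$; so it suffices to show that $\lambda(\cdot)\mapsto \operatorname{ord}_\varepsilon\bigl(v_1(\lambda(\varepsilon)),\dots,v_N(\lambda(\varepsilon))\bigr)=\min_i\operatorname{ord}_\varepsilon v_i(\lambda(\varepsilon))$ is upper semi-continuous on $Arc(\C^n,Z(I))$ and then pre-compose with $\pi$. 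Second, write $\lambda(\varepsilon)=\sum_{m\ge 0}\lambda^{(m)}\varepsilon^m$; the coefficients $\lambda^{(0)},\dots,\lambda^{(m)}$ are the coordinates of the $m$-jet scheme $(\C^n)_m$, hence pull back to regular, in particular continuous, functions on the arc space along the canonical projection, and since each $v_i$ is a polynomial in $\lambda$, the coefficient of $\varepsilon^j$ in $v_i(\lambda(\varepsilon))$ is a polynomial in $\lambda^{(0)},\dots,\lambda^{(j)}$, so again a continuous function on the arc space. Third, observe that $\min_i\operatorname{ord}_\varepsilon v_i(\lambda(\varepsilon))\ge k_0+1$ holds if and only if, for all $i\in\{1,\dots,N\}$ and all $j\in\{0,1,\dots,k_0\}$, the coefficient of $\varepsilon^j$ in $v_i(\lambda(\varepsilon))$ vanishes; this is a finite system of polynomial equations in finitely many jet coordinates, so $\{k(\alpha)\ge k_0+1\}$ is closed and $\mathcal M_{k_0}$ is open, which is the desired upper semi-continuity.

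\emph{Main difficulty.} There is no serious obstacle; the conceptual content is entirely in the jet-coordinate description together with the elementary fact that the vanishing of finitely many jet coordinates is a closed condition. What needs a little care is purely bookkeeping: one should fix the convention for arcs entirely contained in $E$, for which all $v_i(\lambda(\varepsilon))$ vanish identically and the order is $+\infty$, so that such arcs lie in $\{k(\alpha)\ge k_0+1\}$ for every $k_0$ — consistent with closedness and with reading the target as $\N\cup\{\infty\}$ with the order topology; and one should be explicit that ``function on $Arc(B_I\C^n,E)$'' is used only in the weak sense of continuity for the projective-limit topology, which is guaranteed because each relevant function factors through some $(B_I\C^n)_m$. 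Once the jet-coordinate description of the second step is written down, everything else is routine.
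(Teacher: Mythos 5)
Your proof is correct and rests on the same idea as the paper: the paper treats this proposition as a reformulation of Proposition \ref{mk}, whose proof observes that the finitely many $\varepsilon$-coefficients of $(v_1(\lambda(\varepsilon)),\dots,v_N(\lambda(\varepsilon)))$ in (\ref{vk}) vary continuously with the arc, so the order cannot increase under a small deformation. Your jet-coordinate formulation (vanishing of finitely many coefficients pulled back from a finite-level jet scheme is a closed condition, hence $\mathcal M_{k_0}$ is open) is just a more explicit rendering of that same argument, with a sensible added convention for arcs whose projection lies in $Z(I)$.
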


\begin{example}
\label{ex2}
We revisit the polynomial ideal $I=(\lambda_1,\lambda_2^{k+1})\subset \C[\lambda_1,\lambda_2]$ from Example \ref{ex1}, with the same notations. We have
$$
\mathcal M_1 = \{ \alpha : \lambda_1(\varepsilon)= \varepsilon \lambda_1^{(1)} + \varepsilon^2  \lambda_1^{(2)} + \dots ,  
\lambda_2(\varepsilon)= \varepsilon \lambda_2^{(1)} + \varepsilon^2 \lambda_2^{(1)} + \dots, \lambda_1^{(1)} \neq 0 \}
$$
and $\mathcal M_1$ is an irreducible component of the Nash space $ Arc(B_I\C^n,E)$, freely parameterised by $\lambda_1^{(i)}, \lambda_2^{(j)}, \lambda_1^{(1)}\neq 0$. Similarly, for $i=2$ the algebraic set $\mathcal M_2$ is an union of $\mathcal M_1$ and
$$
\mathcal M_2 \setminus \mathcal M_1 = 
\{ \alpha : \lambda_1(\varepsilon)=   \varepsilon^2 \lambda_1^{(2)} + \dots ,  \lambda_2(\varepsilon)= \varepsilon \lambda_2^{(1)} + \varepsilon^2 \lambda_2^{(2)} + \dots, \lambda_1^{(2)} \neq 0 \} .
$$
We note that $\mathcal M_2 \setminus \mathcal M_1$ is not in the closure of $\mathcal M_1$. 
Indeed, our arcs live on the blown-up surface $  B_I\C^n $, which in affine coordinates is 
$\lambda_1\mu=\lambda_2^{k+1}$. For $\alpha_0 \in \mathcal M_2 \setminus \mathcal M_1$ with $\lambda_2^{(1)}\neq 0$ we have 
$$
\nu(\varepsilon) = \varepsilon^{k-1} \nu^{(k-1)} + \dots , \nu^{(k-1)} \neq 0
$$
and for a small deformation $s\to \alpha_s$ we shall still have $\lambda_2^{(1)} \neq 0,  \nu^{(k-1)} \neq 0$ and hence $\lambda_1^{(1)}=0$. 
Therefore $\mathcal M_2 \setminus \mathcal M_1$ is another irreducible component of the Nash space $ Arc(B_I\C^n,E)$. Similar considerations show that $\mathcal M_{k+1}$ is an union of exactly $k+1$ irreducible components 
 $\mathcal M_{i+1} \setminus \mathcal M_i$
of $ Arc(B_I\C^n,E)$, which are defined by the relations 
$$
 \mathcal M_{i+1} \setminus \mathcal M_i =  \{ (\lambda_1(\varepsilon), \lambda_2(\varepsilon)) :  \lambda_2^{(0)}=0, \; \lambda_1^{(0)}=\lambda_1^{(1)}=  \dots = \lambda_1^{(i)} = 0 , \lambda_1^{(i+1)} \neq 0 \}
$$
where $ i = 1, 2\dots  k-1$ and
\begin{align*}
 \mathcal M_{k+1} \setminus \mathcal M_k 
 =   \{ (\lambda_1(\varepsilon) ,\lambda_2(\varepsilon)) : \lambda_2^{(0)}=0, \; \lambda_1^{(0)}=  \dots = \lambda_1^{(k)} = 0 , (\lambda_1^{(k+1)}, \lambda_2^{(1)})  \neq (0,0)\} .
\end{align*}

 It is easily seen (by making use of the same deformation argument) that for $i>k+1$ there are no new components in $ \mathcal M_i$, so the irreducible decomposition of
$
 Arc(B_I\C^n,E)  
$
has exactly $k+1$ irreducible components
$$
 Arc(B_I\C^n,E)  = \overline{ \mathcal M_1} \cup \overline{\mathcal M_2 \setminus \mathcal M_1}\cup \dots \cup  \overline{ \mathcal M_{k+1} \setminus \mathcal M_k }
$$

\end{example}

As observed by Nash, it is a general fact that the arc space $Arc(B_I\C^n,E)$ has finitely many irreducible components. 
Thus, there exists $k$, such that the closure of $ \mathcal M_{k}$ is $Arc(B_I\C^n,E)$.
As the number $k$ is not known, the description of these irreducible components may be a formidable task, which is the content of the Nash problem. The description of  $\mathcal M_{k_{max}}$ , however,  will be sufficient for the purposes of the present paper.

Consider the canonical projection  map
$$
 \pi_I:  Arc(B_I\C^n,E) \to E : \alpha \mapsto \alpha(0) .
$$
which associates to an arc $\alpha$ on $B_I\C^n$ its center $\alpha(0)$. It is an algebraic map, and the image of each irreducible component of $ Arc(B_I\C^n,E)$ is a closed irreducible subset of $E$.
Thus, every point of $E$ is in the image of some irreducible component of the arc space, possibly in a non-unique way.
This motivates the following
\begin{definition}
\label{essentialset}
A  set $\mathcal M \subset  Arc(B_I\C^n,E)$ of irreducible
 components of $ Arc(B_I\C^n,E)$ is said to be essential, provided that
\begin{itemize}
\item
$\pi_I (\mathcal M) = E$
\item
$\mathcal M$ is minimal under inclusions.
\end{itemize}
\end{definition}

Although each component of $\mathcal M$ depends on infinitely many parameters, only a finite number of them are needed to specify the component, the other taking arbitrary complex (or real) values. 
This fact is especially important in the applications. For instance, in Example \ref{ex2}, we have $ \pi_I(\mathcal M_i) = [1:0]\in \mathbb P ^1$ for $i \leq k$ and
$ \pi_I(\mathcal M_{k+1}) =  \mathbb P ^1= E$. Therefore  the essential set $\mathcal M$ is irreducible and equal to $\overline{\mathcal M_{k+1} \setminus \mathcal M_k}$.
An element of $\mathcal M_{k+1} \setminus \mathcal M_k$ is written
$$
\lambda_1(\varepsilon) = \lambda_1^{(k+1)} \varepsilon ^{k+1} + \dots , \lambda_2(\varepsilon) = \lambda_{2}^{(1)} \varepsilon ^{1} + \dots, (\lambda_1^{(k+1)},\lambda_{2}^{(1)})\neq (0,0),
$$
and the dots stay for arbitrary power series $\sum_{i\geq k+2} \lambda_1^{(i)} \varepsilon^i$, $\sum_{i\geq 2} \lambda_{2}^{(i)} \varepsilon^i$. The coefficients of these series are non-essential in the sense that they are arbitrary and the corresponding center $\alpha(0) = [\lambda_1^{(k+1)}:\lambda_{2}^{(1)}]$ does not depend on them. This is not the case for the points on the border $\mathcal M \setminus \{\mathcal M_{(k+1)} \setminus \mathcal M_k\}$ for which $(\lambda_1^{(k+1)},\lambda_{2}^{(1)})= (0,0)$.

The notion of  "essential set" of irreducible components of the arc space $ Arc(B_I\C^n,E)$ is central for this paper,   in the next section it will appear under the term ``complete set of essential perturbations",  as introduced first by Iliev \cite{Iliev}.

\section{Blow up of the  Bautin ideal and the space of essential perturbations}
\label{section3}

In this section we describe a dictionary between the results of the preceding section, and the 16th Hilbert problem on a period annulus.
We use the notations of the Introduction.

\subsection{The Bautin ideal}
For an analytic real family of real analytic plane vector fields $X_\lambda$, such that $X_{\lambda_0} $ has a period annulus,
consider the displacement function (\ref{displacement}) 
$$
F(h,{\lambda })=\Sigma_{k=1}^{+\infty} F_k({\lambda })(h-h_0)^k,
$$
defined in section \ref{openannulus}. If we assume that it is analytic in a neighbourhood of a point $(\lambda_0,h_0)$ then the 
analytic functions $F_k= F_k(\lambda)$ define a germ of an ideal $\mathcal B _{\lambda_0}$ in the ring of germs of analytic functions $\mathcal O_{\lambda_0}$ at $\lambda_0\in \R^n$.  It is also clear, that the germs  $\mathcal B _{\lambda}$ extend  to some complex neighbourhood $U$ of $\lambda_0$, on which they define an ideal sheaf $\mathcal B(U)$ in the sheaf of analytic functions $\mathcal O (U)$. 
The ideal sheaf $\mathcal B (U)$ is called the Bautin ideal, associated to the family $X_{\lambda}$ on $U$, see \cite[[section 12]{ilya08} for details. 

\emph{In the present section we assume that $X_\lambda$ is the family of polynomial vector fields of degree at most $d$ with complex (or sometimes real) coefficients. }Our results are  easily adapted to the case, when the family depends only analytically in the parameters, or even the vector fields $X_\lambda$ are only analytic too.  The principal consequence of this assumption is,  that our results will be global. In particular the Bautin ideal will be polynomially generated. We can forget the origine of our problem and investigate the zeros of the displacement function $F(h,{\lambda })$ in regard to which we impose the following assumptions

\begin{itemize}
\item
There exists an open (in the complex topology) subset $U\subset \C^n$, on which the coefficients  $F_k$ of the displacement function define 
an ideal sheaf
$$
\mathcal B(U) = \cup_{\lambda\in U} \mathcal B _{\lambda} 
$$
\item
The ideal sheaf $\mathcal B(U)$ is polynomially generated : there exists a finite set of polynomials $v_1,v_2, \dots, v_N \in \C[\lambda_1,\lambda_2,\dots, \lambda_n]$, which generate the germ $ \mathcal B _{\lambda}, \forall \lambda \in U$.
\end{itemize}
\begin{definition}
The polynomial  ideal  
$$
{\bf B }= (v_1,v_2, \dots, v_N) \subset \C[\lambda_1,\lambda_2,\dots, \lambda_n]
$$
is called the Bautin ideal, associated to the ideal sheaf $\mathcal B(U)$ on the open set $U$.
\end{definition}

The main example of Bautin ideal in the above sense is given by the case of a  period annulus of center type of a family of polynomial vector fields of fixed degree. If the center point is placed at the origin, then the trace $v_1(\lambda)= Trace X_\lambda(0)$ is a section of the sheaf $\mathcal B(U)$. 
It is well known that on the variety $\{ \lambda\in U : v_1(\lambda)=0 \}$  the ideal sheaf $\mathcal B(U)$ is polynomially generated, say, by $v_2,\dots,v_N$ \cite{ilya08}. This shows that when
$\{v_1= 0 \}$ is a smooth divisor  (for instance $Trace X_\lambda(0)$ is linear in $\lambda$), then  $\mathcal B(U)$ is polynomially generated by 
$v_1,v_2, \dots, v_N$.

\subsection{Displacement function $F$ and the factors $(\Phi_1, \Phi_2, \dots ,\Phi_N)$}

Consider the displacement map in a neighbourhood of $h_0 \in\Sigma$, $\lambda_0 \in U$, defined by its Taylor expansion
$$
\begin{aligned}
F(h,{\lambda })=\Sigma_{k=1}^{+\infty} F_k({\lambda })(h-h_0)^k.
\end{aligned}
$$
As the ideal of coefficients $F_k$ generates the germ of Bautin ideal $\mathcal B_{\lambda_0}$ which is polynomially generated by  $v_1,v_2, \dots , v_N$, then
$$
\begin{aligned}
F(h,{\lambda })= \Sigma_{j=1}^N  v_j (\lambda) \Phi_j (h, \lambda ) .
\end{aligned}
$$
Consider an analytic deformation $X_{\lambda(\varepsilon)}$ in a neighbourhood of $\lambda_0$, where $X_{\lambda_0}$ is a vector field with a period annulus of closed orbits. Suppose further that $M_k$ is the   bifurcation function associated to this deformation, that is to say
$$
F(h,{\lambda(\varepsilon) }) = \varepsilon^k M_k(h) + O( \varepsilon^{k+1}) .
$$
\begin{proposition}
\begin{equation}
\label{bf1}
M_k(h) = \sum_{j=1}^N v_j^{(k)}  \Phi_j (h, \lambda_0 ) .
\end{equation}
where  $v_j^{(k)} $ are polynomials in the coefficients of the series $\lambda(\varepsilon)$, determined by the identities
\begin{equation}
\label{bf2}
v_j(\lambda(\epsilon))= \Sigma_{r\geq 0} v_{j}^{(r)}{\varepsilon}^r,
\end{equation}
and $k=k(\alpha)$ is the order of the arc $\alpha : \varepsilon \mapsto \lambda(\varepsilon)$, see Definition \ref{def1}.
\end{proposition}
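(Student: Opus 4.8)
The plan is to substitute the expansion (\ref{bf2}) directly into the decomposition $F(h,\lambda) = \sum_{j=1}^N v_j(\lambda)\Phi_j(h,\lambda)$ and compare coefficients of $\varepsilon^k$. First I would observe that since $\lambda(\varepsilon)$ is an analytic arc with $\lambda(0)=\lambda_0$, each composite $v_j(\lambda(\varepsilon))$ is an analytic function of $\varepsilon$ vanishing at $\varepsilon=0$ (because $\lambda_0 \in Z(I)$), so it admits a Taylor expansion $v_j(\lambda(\varepsilon)) = \sum_{r\geq 1} v_j^{(r)}\varepsilon^r$, where each $v_j^{(r)}$ is a universal polynomial in the Taylor coefficients of $\lambda(\varepsilon)$ (via the Fa\`a di Bruno formula / chain rule). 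Similarly $\Phi_j(h,\lambda(\varepsilon))$ expands as a power series in $\varepsilon$ with coefficients that are analytic in $h$; denote its constant term $\Phi_j(h,\lambda_0)$.

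Next I would multiply the two series: the $\varepsilon^k$-coefficient of $v_j(\lambda(\varepsilon))\Phi_j(h,\lambda(\varepsilon))$ is $\sum_{r=1}^{k} v_j^{(r)} \Phi_j^{(k-r)}(h)$, where $\Phi_j^{(s)}$ denotes the $\varepsilon^s$-coefficient of $\Phi_j(h,\lambda(\varepsilon))$ and $\Phi_j^{(0)} = \Phi_j(h,\lambda_0)$. Summing over $j$ and using the hypothesis that the lowest-order term of $F(h,\lambda(\varepsilon))$ in $\varepsilon$ is exactly $\varepsilon^k M_k(h)$, I get
\begin{equation*}
M_k(h) = \sum_{j=1}^N \sum_{r=1}^{k} v_j^{(r)} \Phi_j^{(k-r)}(h) .
\end{equation*}
The key point is then that all terms with $r < k$ must cancel: by the definition of the bifurcation function, $F(h,\lambda(\varepsilon)) = O(\varepsilon^k)$, i.e.\ the coefficients of $\varepsilon^1,\dots,\varepsilon^{k-1}$ in $\sum_j v_j(\lambda(\varepsilon))\Phi_j(h,\lambda(\varepsilon))$ vanish identically in $h$. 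I would argue that this forces the surviving contribution at order $\varepsilon^k$ to reduce to the $r=k$ term, $\sum_j v_j^{(k)}\Phi_j(h,\lambda_0)$, giving (\ref{bf1}).

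The main obstacle is precisely justifying that last reduction: a priori the vanishing of lower-order coefficients of $F$ does not termwise kill the mixed products $v_j^{(r)}\Phi_j^{(k-r)}$ with $r<k$, since the cancellation happens only after summing over $j$. The clean way around this is to note that the vanishing of $\varepsilon^m$-coefficients for $m<k$ means, for the lowest nonvanishing order, that the vector $(v_1(\lambda(\varepsilon)),\dots,v_N(\lambda(\varepsilon)))$ has the form $\varepsilon^{\ell}(p + O(\varepsilon))$ for some $\ell \le k$ (Corollary \ref{orderk} and equation (\ref{vk})); combined with $F = O(\varepsilon^k)$ one sees $\ell = k$ is the relevant case for a generic choice, and more importantly, when one collects the $\varepsilon^k$-coefficient, every term involving $\Phi_j^{(s)}$ with $s\ge 1$ is paired with $v_j^{(k-s)}$, and these are exactly the combinations appearing in the (vanishing) lower-order coefficients of $F$ contracted against the higher Taylor terms of $\Phi_j$ — a bookkeeping argument shows they reorganize into a sum that vanishes. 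I would present this as expanding $\Phi_j(h,\lambda(\varepsilon)) = \sum_{s\ge 0}\Phi_j^{(s)}(h)\varepsilon^s$, writing out $F(h,\lambda(\varepsilon)) = \sum_{m\ge 1}\left(\sum_{j}\sum_{r=1}^{m} v_j^{(r)}\Phi_j^{(m-r)}\right)\varepsilon^m$, invoking that the inner sums vanish for $m<k$, and then checking that the $m=k$ inner sum collapses to $\sum_j v_j^{(k)}\Phi_j(h,\lambda_0)$ modulo those already-vanishing lower-order relations — this is the step that needs care but is ultimately elementary.
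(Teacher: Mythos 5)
Your reduction stalls exactly where the real content of the proposition lies, and the device you invoke to get past it does not work. Writing the $\varepsilon^k$-coefficient as $\sum_j\sum_{r\le k} v_j^{(r)}\Phi_j^{(k-r)}$ is fine, but the cross terms with $r<k$ disappear only if \emph{every} $v_j(\lambda(\varepsilon))$ vanishes to order at least $k$, i.e.\ if the order $\ell$ of the arc (the $k$ of (\ref{vk})) equals the Melnikov order $k$. The vanishing of the $\varepsilon^m$-coefficients of $F$ for $m<k$ does not give this: a priori one could have $\ell<k$ with $\sum_j v_j^{(\ell)}\Phi_j(h,\lambda_0)\equiv 0$ in $h$, since for an arbitrary set of generators the functions $\Phi_j(h,\lambda_0)$ may be linearly dependent (there may be more generators than the dimension they span, and syzygies among the $v_j$); in that case the $\varepsilon^k$-coefficient genuinely retains mixed terms $v_j^{(r)}\Phi_j^{(k-r)}$ with $r<k$ and no ``bookkeeping'' reorganization makes them vanish. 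Your remark that ``$\ell=k$ is the relevant case for a generic choice'' concedes the point: the proposition is asserted for every arc, and the equality $\ell=k$ for every arc is precisely what must be proved — it cannot be extracted formally from $F=O(\varepsilon^k)$ plus the decomposition $F=\sum_j v_j\Phi_j$.

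The missing idea, which is how the paper argues, is to first use a privileged system of generators: finitely many Taylor coefficients $u_1,\dots,u_{N'}$ of $F$ in $h-h_0$ generate the localized Bautin ideal, and for these the division produces cofactors with $\Psi_j(h,\lambda_0)=(h-h_0)^j+\dots$. These are ``triangular'' in $h-h_0$, hence linearly independent, so if $(u_1^{(k')},\dots,u_{N'}^{(k')})$ is the (nonzero) leading vector of the arc with respect to the $u_j$, then $\sum_j u_j^{(k')}\Psi_j(h,\lambda_0)\not\equiv 0$; consequently the Melnikov order equals the arc order $k'$ and $M_{k'}=\sum_j u_j^{(k')}\Psi_j(h,\lambda_0)$. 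One then transfers this to the polynomial generators $v_j$ via Proposition \ref{blowup} and Corollary \ref{orderk}: the matrices $a',a''$ of (\ref{eq3}) show that the arc order is the same for both systems ($k=k'$) and relate the leading vectors at $\lambda_0$, after which all $v_j(\lambda(\varepsilon))$ vanish to order exactly $k$ and your own expansion collapses, for any choice of $\Phi_j$, to $M_k(h)=\sum_j v_j^{(k)}\Phi_j(h,\lambda_0)$. Without this input — linear independence of the $\Psi_j(h,\lambda_0)$ for the privileged generators plus generator-independence of the order — your final ``elementary'' collapse step is unjustified.
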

\begin{remark}
The claim  generalizes the Bautin's fundamental lemma, as restated by 
C. Chicone and M. Jacobs  \cite[Lemma 4.1]{Chicone}, see also
(\cite{BGG}). Indeed, we do not suppose any special properties of the polynomial generators $(v_1,v_2, \dots , v_N)$. 
Note also that $h_0$ does not correspond to a 
 singular point (a center) of the vector field $X_{\lambda_0}$ which might not have a center at all (see Figure \ref{annuli}).
\end{remark}
\begin{proof}
 If
$$
\begin{aligned}
F(h,{\lambda })=\Sigma_{j=1}^{+\infty} u_j({\lambda })(h-h_0)^j,
\end{aligned}
$$
and $u_1,u_2, \dots , u_{N'}$ are generators of the localized Bautin ideal, then 
$$
F(h,{\lambda })=\Sigma_{j=1}^{N'} u_j({\lambda })\Psi_j (h, \lambda ) 
$$
and
$$
F(h,{\lambda(\varepsilon) }) = \varepsilon^{k'} \sum_{j=1}^{N'} u_j^{(k')}  \Psi_j (h, \lambda_0 ) + O( \varepsilon^{k'+1})
$$
where 
$$(u_1(\lambda(\varepsilon)),\dots ,
u_{N'}(\lambda(\varepsilon)))= \varepsilon^{k'} (1+ O(\varepsilon)) (u_1^{(k')},u_2^{(k')}, \dots, u_{N'}^{(k')}) 
$$
and $\Psi_j (h, \lambda_0 ) = (h-h_0)^j+\dots$.
According to Proposition \ref{blowupp}, the blowup does not depend on the generators. In particular if
$v_1,v_2, \dots, v_N$ is another set of generators, and
$$
(v_1(\lambda(\varepsilon)),\dots ,
v_N(\lambda(\varepsilon)))= \varepsilon^{k} (1+ O(\varepsilon)) (v_1^{(k)},v_2^{(k)}, \dots, u_{N}^{(k)})
$$
then $k=k'$, see (\ref{eq4}). Using the isomorphism (\ref{eq3}) we obtain the desired representation.

\end{proof}
The above Proposition has several implications. It allows to identify every bifurcation function $M$ of order $k$ to a point $ P\in E_{\lambda_0}$ by the correspondence
\begin{align}
\label{mel1}
M_k \to P=[v_1^{(k)},v_2^{(k)}: \dots : v_n^{(k)}] \in E_{\lambda_0} .
\end{align}
The opposite is also true : given a point $P\in E_{\lambda_0}$, by Proposition \ref{exceptional}, there is an arc from which we reconstruct the bifurcation function $M$, hence
\begin{corollary}
The projectivized set of bifurcation functions associated to one - parameter deformations $X_{\lambda(\varepsilon)}$ of the vector field $X_{\lambda_0}$ is in bijective correspondence with the points on the exceptional divisor $E_{\lambda_0}$. This correspondence is a linear isomorphism (by Proposition \ref{blowupp} (ii))
\end{corollary}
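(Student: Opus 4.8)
The plan is to spell out the correspondence (\ref{mel1}) in both directions, following the route indicated in the statement (forward via the preceding Proposition, converse via Proposition \ref{exceptional}, linearity via Proposition \ref{blowup}(ii)), and to make precise the one point that turns "correspondence" into a genuine bijection. The essential preliminary is to work with the \emph{distinguished} generators of the localized Bautin ideal $\mathcal B_{\lambda_0}$ supplied by the Taylor coefficients themselves, exactly as in the proof of the preceding Proposition: since $\mathcal O_{\lambda_0}$ is Noetherian there is an $N'$ for which $F_1,\dots,F_{N'}$ already generate $\mathcal B_{\lambda_0}$, and writing $F_j=\sum_{i\le N'}a_{ji}F_i$ for $j>N'$ yields $F(h,\lambda)=\sum_{j=1}^{N'}F_j(\lambda)\,\Psi_j(h,\lambda)$ with $\Psi_j(h,\lambda_0)=(h-h_0)^j+O((h-h_0)^{N'+1})$. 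Because the leading $h$-powers of $\Psi_1(\cdot,\lambda_0),\dots,\Psi_{N'}(\cdot,\lambda_0)$ are pairwise distinct, these functions are linearly independent on $\Sigma$; this independence (essentially Bautin's fundamental lemma, cf.\ the Remark above) is the mechanism behind the whole argument.

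Next I would re-run the computation of the preceding Proposition with these generators. Let $\lambda(\varepsilon)$ be an analytic arc not contained in the zero set of $\mathbf B$, and let $k$ be the largest integer such that every $F_j(\lambda(\varepsilon))$ is divisible by $\varepsilon^k$; then the vector $(F_1^{(k)},\dots,F_{N'}^{(k)})$ of $\varepsilon^k$-coefficients is nonzero. Substituting $\lambda(\varepsilon)$ into $F(h,\lambda)=\sum_j F_j(\lambda)\Psi_j(h,\lambda)$, every contribution of $\varepsilon$-order below $k$ vanishes (each $F_j(\lambda(\varepsilon))$ already does), and the coefficient of $\varepsilon^k$ is exactly $\sum_{j=1}^{N'}F_j^{(k)}\Psi_j(h,\lambda_0)$; by linear independence of the $\Psi_j(\cdot,\lambda_0)$ and nonvanishing of $(F_j^{(k)})$ this leading term is not identically zero. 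Hence the bifurcation function of $\lambda(\varepsilon)$ is precisely $M_k=\sum_{j=1}^{N'}F_j^{(k)}\Psi_j(h,\lambda_0)$, its $\varepsilon$-order equals $k$, and — again by linear independence — the coefficients $F_j^{(k)}$, hence the projective point $[F_1^{(k)}:\dots:F_{N'}^{(k)}]$, are recovered from $M_k$ alone. By (\ref{vk}) this point is nothing but $\pi_I(\alpha)=\alpha(0)\in E_{\lambda_0}$, so the assignment $[M_k]\mapsto P$ is well defined and injective.

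Surjectivity onto $E_{\lambda_0}$ is where I would invoke Proposition \ref{exceptional}: any $P\in E_{\lambda_0}$ equals $\lim_{\varepsilon\to0}[F_1(\lambda(\varepsilon)):\dots:F_{N'}(\lambda(\varepsilon))]$ for some analytic arc, which automatically has $\lambda(0)=\pi(P)=\lambda_0$; by the previous step that arc has bifurcation function $\sum_j p_j\Psi_j(h,\lambda_0)$ with $[p_1:\dots:p_{N'}]=P$, so $P$ is attained. It then remains to check that the identification is independent of the generators and is linear: comparing the distinguished set $u=(F_1,\dots,F_{N'})$ with an arbitrary generating set $v=(v_1,\dots,v_N)$, related by $v=ua'$, $u=va''$ as in (\ref{eq3}), Proposition \ref{blowup}(ii) says the two fibres $E_{\lambda_0}\subset\mathbb P^{N'-1}$ and $E_{\lambda_0}\subset\mathbb P^{N-1}$ correspond under the linear isomorphism induced by $a''(\lambda_0)$, while the same relations carry $F=\sum F_j\Psi_j$ into $F=\sum v_j\Phi_j$ and the formula for $M_k$ accordingly; thus the two resulting identifications of the set of bifurcation functions with $E_{\lambda_0}$ differ exactly by that linear isomorphism, which is the final assertion.

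I expect the one genuinely delicate point to be the non-cancellation in the leading $\varepsilon$-coefficient, i.e.\ that $\sum_j F_j^{(k)}\Psi_j(h,\lambda_0)\not\equiv0$ whenever $(F_j^{(k)})\ne0$; everything else is bookkeeping once the correspondence is in place. This is precisely where the distinguished generators are indispensable: for a badly chosen generating set the functions $\Phi_j(\cdot,\lambda_0)$ may be linearly dependent, the leading term could then vanish, and a nonzero direction $P\in E_{\lambda_0}$ could a priori yield a bifurcation function of strictly higher order — so that $[M]\mapsto P$ would fail to be defined. Passing to the Taylor-coefficient generators, legitimate by Proposition \ref{blowup} and Corollary \ref{orderk}, is what rules this out.
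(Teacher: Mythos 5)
Your proposal is correct and follows essentially the same route as the paper: the forward identification $M_k\mapsto[F_1^{(k)}:\dots:F_{N'}^{(k)}]$ via the Bautin-type decomposition with the Taylor-coefficient generators (the paper's formula (\ref{bf1}) and its proof), surjectivity via Proposition \ref{exceptional}, and independence of generators and linearity via Proposition \ref{blowup}(ii). The only difference is that you make explicit the non-cancellation and injectivity step through the linear independence of the $\Psi_j(\cdot,\lambda_0)$, which the paper leaves implicit (hinted at in its Remark comparing with Bautin's fundamental lemma); this is a welcome clarification, not a different argument.
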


Let $V_\lambda\subset \C^N$ be the vector space spanned by the set of points on the exceptional divisor $E_\lambda \subset \mathbb P^{N-1}$ in $\C^N$.
\begin{corollary}
The space of all bifurcation functions associated to deformations $X_{\lambda(\varepsilon)}$ of the vector field $X_{\lambda_0}$ span a vector space of dimension $\dim V_{\lambda_0}$.
\end{corollary}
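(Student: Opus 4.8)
The plan is to derive the statement directly from the previous Corollary, which identifies the projectivized set of bifurcation functions with the points of $E_{\lambda_0}$, together with the representation formula (\ref{bf1}). Each bifurcation function $M_k$ is, by (\ref{bf1}), the image of the vector $p=(v_1^{(k)},\dots,v_N^{(k)})\in\C^N$ under the fixed linear map
$$
L:\C^N\to \{\text{analytic functions on }\Sigma\},\qquad L(a_1,\dots,a_N)=\sum_{j=1}^N a_j\,\Phi_j(h,\lambda_0).
$$
So the first step is to observe that the set of all bifurcation functions arising from one-parameter deformations $X_{\lambda(\varepsilon)}$ is precisely $L(\mathcal{C})$, where $\mathcal{C}\subset\C^N$ is the cone over $E_{\lambda_0}$, i.e. the union of the lines $\C\cdot p$ over all $p$ whose projectivization lies in $E_{\lambda_0}$; by definition $V_{\lambda_0}=\mathrm{span}_\C(\mathcal{C})$ is the linear span of that cone.

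The second step is to show that the linear span of $L(\mathcal{C})$ equals $L(V_{\lambda_0})$ and that $\dim L(V_{\lambda_0})=\dim V_{\lambda_0}$. The first equality is immediate since $L$ is linear: $\mathrm{span}\,L(\mathcal C)=L(\mathrm{span}\,\mathcal C)=L(V_{\lambda_0})$. For the second, I need injectivity of the restriction $L|_{V_{\lambda_0}}$. This is where the functions $\Phi_j$ must be used carefully. I would argue that the $\Phi_j(h,\lambda_0)$ which actually occur in (\ref{bf1}) — equivalently, after passing to a \emph{minimal} system of generators $v_1,\dots,v_N$ of the germ $\mathcal B_{\lambda_0}$ — are linearly independent as functions on $\Sigma$. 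Indeed, with a minimal generating set the leading terms of the $v_j$ at $\lambda_0$ are linearly independent, and correspondingly one can arrange that $\Phi_j(h,\lambda_0)=(h-h_0)^{j}+\cdots$ have distinct leading orders (as in the proof of the Proposition, where $\Psi_j(h,\lambda_0)=(h-h_0)^j+\dots$); hence they are independent, so $L$ is injective on all of $\C^N$, a fortiori on $V_{\lambda_0}$. Then $\dim(\text{space of bifurcation functions})=\dim L(V_{\lambda_0})=\dim V_{\lambda_0}$.

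The main obstacle is precisely the independence of the $\Phi_j$: the representation $F=\sum v_j\Phi_j$ is not unique, and for a redundant generating set the $\Phi_j$ need not be independent, so $L$ need not be injective and one could only conclude $\dim\le\dim V_{\lambda_0}$. The resolution is to invoke Proposition \ref{blowup}, which guarantees that $E_{\lambda_0}$, hence $V_{\lambda_0}$, is independent of the chosen generators, and then to do the computation with a minimal generating set, where the Bautin-type normalization (distinct leading orders of the $v_j$ at $\lambda_0$, and correspondingly of the $\Phi_j$) forces independence. One should also check the routine point that if $P\in E_{\lambda_0}$ is realized by an arc of order $k$ via (\ref{vk})–(\ref{bf2}), then scaling the arc parameter $\varepsilon\mapsto t\varepsilon$ rescales $p$ by $t^k$, so every vector on the cone $\mathcal C$ (not merely one representative per line) is realized by some deformation; this makes $L(\mathcal C)$ genuinely a cone and legitimizes replacing it by its span without changing the span. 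Assembling these, the dimension count follows.
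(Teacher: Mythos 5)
Your argument is correct and follows essentially the route the paper intends: the corollary is stated as an immediate consequence of the preceding one, and the injectivity you supply --- passing to generators chosen among the coefficients of $F$, for which $\Psi_j(h,\lambda_0)=(h-h_0)^j+\dots$ have staggered leading orders and are therefore independent, then transferring to the polynomial generators via Proposition \ref{blowup} --- is exactly the ingredient already contained in the paper's proof of the representation $M_k(h)=\sum_j v_j^{(k)}\Phi_j(h,\lambda_0)$ and in the ``linear isomorphism'' assertion of the preceding corollary. Your additional check that every vector of the cone (not merely one representative per line) is realized by rescaling $\varepsilon$ is harmless but unnecessary, since only the linear span enters the dimension count.
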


As we already noted in the preceding section,  $\pi_I (\mathcal M_{k_{max}}) = E$, see also Definition \ref{essentialset}.
This implies

\begin{corollary}
The minimal order of every bifurcation functions associated to $\lambda_0$ is bounded by $k_{\lambda_0} = \sup_{\pi(P)=\lambda_0} k_P$, and the number
$\sup_{\lambda\in U} k_{\lambda}$ is finite.
\end{corollary}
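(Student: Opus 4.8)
The plan is to deduce this corollary directly from the machinery already assembled, without any new geometric input. The statement has two parts: first, that the minimal order $k_{\lambda_0}$ of a bifurcation function attached to $\lambda_0$ is bounded by $k_{\lambda_0}=\sup_{\pi(P)=\lambda_0}k_P$ (which is just $k_*$ from \eqref{k*} with $\lambda_*=\lambda_0$); second, that $\sup_{\lambda\in U}k_\lambda<\infty$. Both are essentially restatements of earlier results once the dictionary is invoked, so the proof should be short.

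For the first part I would argue as follows. By Proposition \ref{exceptional} and the correspondence \eqref{mel1}, a bifurcation function $M_k$ associated to a one-parameter deformation $X_{\lambda(\varepsilon)}$ with $\lambda(0)=\lambda_0$ corresponds to an arc $\alpha\in Arc(B_I\C^n,E_{\lambda_0})$ with center $P=\alpha(0)\in E_{\lambda_0}$, and the order $k$ of $M_k$ in $\varepsilon$ equals the order $k(\alpha)$ of the arc at $P$ in the sense of \eqref{vk} (this is exactly what the displayed identity $F(h,\lambda(\varepsilon))=\varepsilon^k M_k(h)+O(\varepsilon^{k+1})$ with $M_k$ given by \eqref{bf1} records, together with Corollary \ref{orderk} guaranteeing this $k$ is intrinsic). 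Since the minimal order over all arcs centered at a fixed $P$ is $k_P$, and every $M_k$ attached to $\lambda_0$ has its center $P$ lying over $\lambda_0$, the minimal order realizable is at most $\sup_{\pi(P)=\lambda_0}k_P=k_{\lambda_0}$. For the second part, observe that $\sup_{\lambda\in U}k_\lambda=\sup_{\lambda_*\in Z}k_*=k_{max}$ in the notation of \eqref{kk} (points $\lambda\notin Z(I)$ contribute nothing, since there the blow-up is an isomorphism and the relevant arcs have order $0$), and $k_{max}<\infty$ is precisely Theorem \ref{finitek}.

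Concretely, the steps in order are: (1) fix $\lambda_0\in Z(\mathbf B)$ and a bifurcation function $M$ of order $k$ associated to it; invoke Proposition \ref{exceptional} to produce the corresponding arc $\alpha$ with $\pi(\alpha(0))=\lambda_0$; (2) use the Proposition \eqref{bf1}--\eqref{bf2} together with Corollary \ref{orderk} to identify $k$ with the intrinsic order $k(\alpha)$ of that arc at its center $P=\alpha(0)$; (3) conclude $k\ge k_P\ge$ (the minimal achievable order), hence the minimal order over all $M$ attached to $\lambda_0$ is at most $k_*=k_{\lambda_0}$; (4) take suprema over $\lambda_0$ and identify $\sup_{\lambda\in U}k_\lambda$ with $k_{max}$; (5) quote Theorem \ref{finitek} for finiteness.

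I do not anticipate a serious obstacle here; the corollary is by design a bookkeeping consequence of Theorem \ref{finitek} and the arc--perturbation dictionary. The one point requiring a little care is step (2): one must make sure the order of vanishing of the displacement function $F(h,\lambda(\varepsilon))$ in $\varepsilon$ genuinely coincides with the arc order $k(\alpha)$ defined via $(v_1(\lambda(\varepsilon)),\dots,v_N(\lambda(\varepsilon)))=\varepsilon^k(1+O(\varepsilon))p$, rather than merely bounding it. This follows because $F(h,\lambda)=\sum_j v_j(\lambda)\Phi_j(h,\lambda)$ with the $\Phi_j$ independent of $\varepsilon$ and with $\Psi_j(h,\lambda_0)=(h-h_0)^j+\cdots$ linearly independent (so no cancellation at the leading order can occur), exactly as in the proof of the Proposition preceding \eqref{mel1}; invoking that argument closes the gap. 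A secondary subtlety is whether the $\sup$ should be taken over $U$ or only over $Z(\mathbf B)\cap U$, but since off the center set the Melnikov functions are of order $0$ the two agree, and in any case $k_{max}$ dominates both.
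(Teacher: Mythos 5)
Your proposal is correct and follows essentially the same route as the paper, which treats the corollary as immediate bookkeeping from the dictionary of Proposition (\ref{bf1})--(\ref{bf2}) and (\ref{mel1}) (order of the bifurcation function $=$ order of the arc, with no leading-order cancellation since the $\Psi_j$ have distinct vanishing orders), the fact that every $P\in E$ is the center of an arc of order $k_P$, and Theorem \ref{finitek} giving $\sup_{\lambda}k_\lambda\le k_{max}<\infty$. The extra care you take in step (2) is exactly the content of the paper's preceding Proposition, so nothing is missing.
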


Definition \ref{essentialset} can be reformulated as follows
\begin{definition}
\label{essentialperturbation}
A complete set of essential perturbations of a period annulus is a set $\mathcal M$ of  one-parameter deformations $X_{\lambda(\varepsilon)}$ 
which, viewed as  arcs, form an essential  set of irreducible components of the   Nash space of arcs $  Arc(B_{I}\C^n,E)$. Each irreducible component of $\mathcal M$ is referred to as an essential perturbation.
\end{definition}

In his seminal paper, Iliev 
describes 
\begin{quote}
``\emph{A set of essential perturbations which can realize the maximum number of limit cycles produced by the whole 
class of quadratic systems}" \cite[page 22]{Iliev}. 
\end{quote}
The property ``to realize the maximum number of limit cycles", however, is a consequence of the fact, that the selected set of essential perturbations produces all possible bifurcation functions.  To describe a set of essential perturbations, we need to study the Nash arc space $Arc( B_{I}\C^n, E)$ and select appropriate essential irreducible components. 
It is also clear from Definition \ref{essentialset} that the number of essential perturbations is not smaller than the number of irreducible components of the center variety $V(I)$.
As we shall see in the next section, it can be strictly bigger: the number of the irreducible components of the center set of quadratic vector fields is four, but the number of essential perturbations is five.

For convenience of the reader we resume the correspondence between section \ref{section2} and section \ref{section3}
in the following  table. 
\vspace{1cm}

\begin{tabular}{|c|c|}
  \hline
 arcs in algebraic geometry & bifurcation theory  \\
  \hline
  parameter space $\{\lambda \}$ & space of plane vector fields $X_\lambda$   \\
  ideal $I$ &Bautin ideal $\bf B$   \\
  variety $V(I)$ & center set  \\
   Blow up $B_I\C^n$ of an ideal $I$ &   Blow up of the Bautin ideal \\  
   arc on  the  blow up $B_I\C^n$  & one-parameter deformation    $X_{\lambda(\varepsilon)}$  \\
   order of an arc & order of a bifurcation function \\
   point on the exceptional divisor & bifurcation function associated to $X_{\lambda(\varepsilon)}$\\
   exceptional divisor & set of all bifurcation functions \\
   essential set of irreducible components \\ of the Nash space & complete set of essential perturbations \\ 
  \hline
\end{tabular}

\section{Quadratic centers and Iliev's essential perturbations}
\label{quadratic}

We revisit Iliev's computations of essential perturbations with emphasis on Nash spaces of arcs, as explained in the previous two sections.
We focus on the Kapteyn normal form $X_\lambda$  (\ref{BautinNF}) of quadratic systems, which we recall briefly.

A quadratic vector field near a center is conveniently written in complex notations $z=x+{\rm i}y$, see   \cite[Zoladek]{Zoladek1}:

\begin{equation}\label{CQuad}
\dot{z}=({\rm i}+\lambda)z+Az^2+B\mid z\mid^2+C\overline{z}^2.
\end{equation}
with $\lambda, x, y \in\mathbb{R},(A,B,C)\in\mathbb{C}^3$. The underlying real parameters of the planar vector field are $\lambda,a,a',b,b',c,c'$ :

\begin{eqnarray}
\begin{aligned}\label{Quad}
\dot{x}=& \;{\lambda} x-y+ax^2+bxy+cy^2,\\
\dot{y}= &\; x+{\lambda} y+a'x^2+b'xy+c'y^2,
\end{aligned}
\end{eqnarray}
with the linear relations:

\begin{align*}
a+{\rm i}a' &=A+B+C\\
b+{\rm i}b'&=2{\rm i}(A-C)\\
 c+{\rm i}c'&=-A+B-C,\\
A&=\frac{1}{4}[a-c+b'+{\rm i}(a'-c'-b)]\\
 B&=\frac{1}{2}[a+c+{\rm i}(a'+c')] \\
  C&=\frac{1}{4}[a-c-b'+{\rm i}(a'-c'+b)].
\end{align*}

With these variables the Bautin ideal is generated by the four polynomials (with real coefficients):

\begin{align*}
v_1&=\lambda\\
v_2&={\rm Im}(AB)\\
v_3&={\rm Im}[(2A+\overline{B})(A-2\overline{B})\overline{B}C]\\
v_4&={\rm Im}[(\mid B\mid^2-\mid C\mid^2)(2A+\overline{B})\overline{B}^2C].
\end{align*}

The components are then given by:

\begin{eqnarray}
\label{strata}
\begin{aligned}
LV:  & \;\lambda=B=0\\
R\;:& \;\lambda={\rm Im}(AB)={\rm Im}(\overline{B}^3C)={\rm Im}(A^3C)=0\\
H\::& \;\lambda=2A+\overline{B}=0\\
Q_4:&\; \lambda=(A-2\overline{B})=(\mid B\mid-\mid C\mid)=0.
\end{aligned}
\end{eqnarray}

The above computation goes back essentially to Dulac and Kapteyn, see \cite{Zoladek1},  \cite{DSchlomiuk}.  
 The usual terminology in the real case  is, according to (\ref{strata}) : Hamiltonian $H$, reversible (or symmetric) $R$, Lotka-Volterra $LV$, and co-dimension four (or Darboux) $Q_4$ component of the center set, respectively.
Another terminology is introduced in  \cite[section 13]{ilya08}. \vskip 1pt

If we assume $B\neq 0$, performing a suitable rotation and scaling of coordinates, we can suppose $B=2$. Similarly if $B=0$ but $A\neq 0$, we take $A=1$ (LV), and when $A=B=0$, we take $C=1$ (Hamiltonian triangle). In the case where $B=2$, there is a center if and only if the following conditions hold: (i) $A=-1$ (H), (ii) $A=a$ and $C=b$ are real (R), (iii) $A=4$, $\mid C\mid=2$ (Codimension 4).

The list of quadratic centers looks hence as follows:

\begin{itemize}
\item $\dot{z}=-{\rm i}z-z^2+2\mid z\mid^2+(b+{\rm i}c)\overline{z}^2,$ Hamiltonian (H)
\item $\dot{z}=-{\rm i}z+az^2+2\mid z\mid^2+b\overline{z}^2,$ Reversible (R)
\item $\dot{z}=-{\rm i}z+z^2+(b+{\rm i}c)\overline{z}^2,$ Lotka-Volterra (LV)
\item $\dot{z}=-{\rm i}z+4z^2+2\mid z\mid^2+(b+{\rm i}c)\overline{z}^2, \mid b+{\rm i}c\mid=2,$ Codimension 4 $(Q_4)$
\item $\dot{z}=-{\rm i}z+\overline{z}^2,$ Hamiltonian triangle.
\end{itemize}
 
We can observe that up to a rotation and scaling of coordinates, $H$, $R$ and $LV$ can be represented by planes and the ``Codimension 4" stratum by a quadric (cf. figure 2).
\vskip 1pt
In fact, centers from (H) with $c=0$ are also reversible. They belong to the intersection $(H)\cap (R)$ and can also be defined in $(R)$ by $a=1$. Note that centers from $(Q_4)$ such that $c=0, b=\pm 2$ are also reversible. The Lotka-Volterra centers so that $c=0$ are also reversible. They form, together with the Hamiltonian triangle the {\it degenerate} centers:

\begin{itemize}
\item $\dot{z}=-{\rm i}z-z^2+2\mid z\mid^2+b\overline{z}^2,$ Reversible Hamiltonian $(H)\cap (R)$ 
\item $\dot{z}=-{\rm i}z+z^2+b\overline{z}^2,$ Reversible Lotka-Volterra $(LV)\cap (R)$
\item $\dot{z}=-{\rm i}z+4z^2+2\mid z\mid^2+b\overline{z}^2, b=\pm 2,$ $(Q_4)\cap (R)$
\item $\dot{z}=-{\rm i}z+\overline{z}^2,$ $(LV)\cap (H)$, Hamiltonian triangle.
\end{itemize}
\vskip 1pt

The Kapteyn normal form of the quadratic vector fields near a linear center is:

\begin{eqnarray}\label{BautinNF}
X_\lambda :\left\{\begin{aligned}
\dot{x}={\lambda}_1 x-y-{\lambda}_3x^2+(2\lambda_2+\lambda_5)xy + {\lambda}_6y^2,\\
\dot{y}=x+{\lambda}_1 y+{\lambda}_2x^2+(2\lambda_3+\lambda_4)xy-{\lambda}_2y^2.
\end{aligned}
\right.
\end{eqnarray}
It provides indeed a local affine chart of the space of quadratic vector fields (\ref{CQuad}) modulo the action of $\mathbb{C}^*$.
The Kapteyn normal form, although simple, can be misleading, as pointed out first by Zoladek \cite[Remark 2, page 238]{Zoladek1}.
One should be very careful that they can be compared with the previous complex parameters only under the extra condition ${\rm Im}(B)=0$. Nevertheless, it is more convenient to use these Kapteyn parameters to compute effectively  the corresponding space of arcs and jets.

The relation between Kapteyn's coefficients and the previous complex coefficients are given by:

\begin{align*}
A&=(\lambda_3-\lambda_6+\lambda_4-{\rm i}\lambda_5)/4\\
B&=(\lambda_6-\lambda_3)/2\\
C&=[-(3\lambda_3+\lambda_6+\lambda_4)+(4\lambda_2+\lambda_5){\rm i}]/4.
\end{align*}

For this choice of parameters, the Bautin ideal $\bf B \subset \mathbb K[\lambda_1,\lambda_2,\lambda_3,\lambda_4,\lambda_5,\lambda_6]$, where $\mathbb K = \C$ or $\R$, is generated by

\begin{eqnarray}
\label{kapteynideal}
\begin{aligned}
v_1(\lambda)&=\lambda_1,\\
v_2(\lambda)&=\lambda_5(\lambda_3-\lambda_6),\\
v_3(\lambda)&=\lambda_2\lambda_4(\lambda_3-\lambda_6)(\lambda_4+5\lambda_3-5\lambda_6),\\
v_4(\lambda)&=\lambda_2\lambda_4(\lambda_3-\lambda_6)^2(\lambda_3\lambda_6-2\lambda_6^2-\lambda_2^2)
\end{aligned}
\end{eqnarray}

The affine algebraic variety defined by this ideal is denoted $Z= Z({\bf B})$ and  called the quadratic centre set. The variety displays four irreducible components 
 $Z= I_1\cup I_2\cup I_3\cup I_4$ : three planes and one affine quadratic cone. When the base field is $\R$ their mutual position in $\R^6$ is shown on fig. \ref{centerset1}. Note that $I_1\cap I_2$ , $I_2\cap I_3$, $I_3 \cap I_1$ are two-planes, while $I_1\cap I_2 \cap I_3$ is a straight line.
 \begin{eqnarray}
\label{kapteynzeroset}
\begin{aligned}
I_1&:\lambda_1=0, \lambda_3-\lambda_6=0\\
I_2&:\lambda_1=0, \lambda_2=0, \lambda_5=0\\
I_3&:\lambda_1=0, \lambda_4=0, \lambda_5=0\\
I_4&:\lambda_1=0, \lambda_5=0, \lambda_4+5\lambda_3-5\lambda_6=0, \lambda_3\lambda_6-2\lambda_6^2-\lambda^2_2=0.
\end{aligned}
\end{eqnarray}

\begin{figure}
\includegraphics[width=10cm]{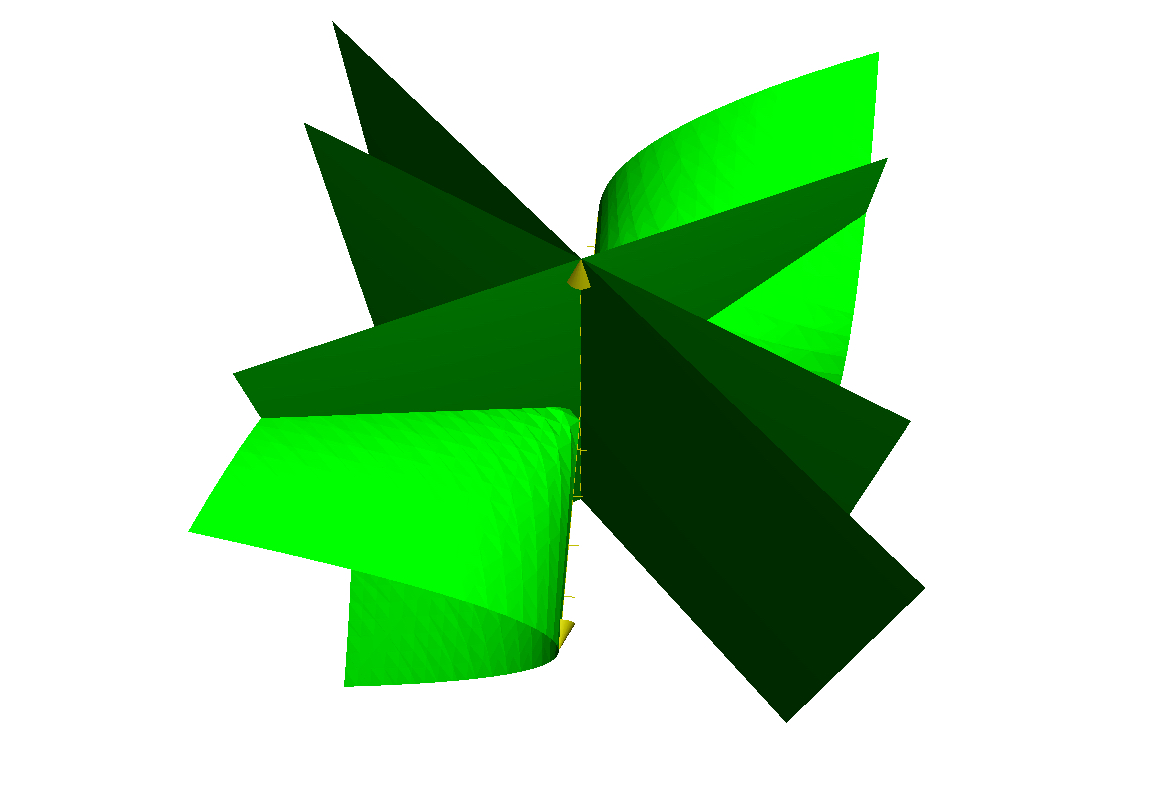}
\caption{Sketch of the mutual position in $\R^6$ of the irreducible components $I_1,I_2,I_3, I_4$ of the real center set (\ref{kapteynzeroset})}
\label{centerset1}
\end{figure}

These  irreducible components coincide with $LV$, $R$, $H$ and $Q_4$ but only under the extra-condition  $B\in \R^*$. Indeed,  a vector field which belongs to $LV$ and $H$, so that $A=B=0$ is necessarily in $R$ (it is the so-called Hamiltonian triangle). Still, if it belongs to $I_1\cap I_3$, then $\lambda_3=\lambda_6, \lambda_4=\lambda_5=0$ but $\lambda_2$ is not necessarily equal to $0$ and it does not necessarily belongs to $I_2$.

We consider the blowup 
of the Bautin ideal ${\bf B}$ which is the graph  of the map
\begin{equation}
\begin{aligned}
\C^6 &\to \mathbb P^3\\
\lambda & \mapsto [v_1:v_2:v_3:v_4] 
\label{blowup1}
\end{aligned}
\end{equation}
with projection
$$
\pi : \C^6 \times \mathbb P^3  \to \C^6
$$
and exceptional divisor $E= \pi^{-1}(Z({\bf B}))$, see  (\ref{blowup}). As $E$ is identified to the set of bifurcation functions, and arcs to one-parameter perturbations $X_{\lambda(\varepsilon)}$,
 we construct a set of essential perturbation in the sense of Definition \ref{essentialperturbation} and Definition \ref{essentialset}.
This computation  breakes into two steps:
\begin{itemize}
\item 
find a minimal list of  families of perturbations $X_{\lambda(\varepsilon)}$ which project under $\pi_I$ to the exceptional divisor $E$, see Definition \ref{essentialset}
\item
check whether the essential perturbations are irreducible components of the corresponding Nash arc space
\end{itemize}
The first step,  can be found in \cite[Theorem 6]{BGG}, where ten families of perturbations were identified, which produce all possible bifurcation functions.  
As we shall see below, only five of them are irreducible components of the Nash space, the others are contained in their closures. These essential perturbations are denoted below 
$$Arc(I_1), Arc(I_2), Arc(I_3), Arc(I_4),  Arc(I_1\cap  I_3)$$
 and correspond respectively to the Lotka-Volterra, reversible, Hamiltonian, co-dimension four, and Hamiltonian triangle strata of the centre set. 
 This remarkable geometric fact has an analytic counterpart : 
 only bifurcation functions corresponding to the five irreducible components of the Nash space have to be considered, as far as we can obtain all the others by taking suitable limits.
 For instance, the bifurcation functions corresponding to the arc space $Arc(I_1 \cap  I_2)$ (reversible Lotka-Volterra systems) can be obtained as appropriate  limits of bifurcation functions associated to $Arc(I_1)$,
 in contrast to bifurcation functions associated to $Arc(I_1\cap  I_3)$ which can not be obtained as such limits. The reason is that 
 $Arc(I_1 \cap I_2)$ is in the closure of $Arc(I_1)$, while $Arc(I_1\cap  I_3)$ is a distinct irreducible component of the Nash arc space of the blowup of the Bautin ideal (and hence is not in the closure of $Arc(I_1)$). 
This confluence phenomenon was observed by Iliev in \cite[Corollary 1]{Iliev}.

\subsection{Smooth points of the center set $Z({\bf B}) $ } 
The base field in this section is $\C$.
It is straightforward to check that a point on the centre set
$
Z({\bf B}) = I_1\cup I_2 \cup I_3 \cup I_4
$
(\ref{kapteynzeroset})
is smooth, if and only if  it belongs to some $I_i$ but does not belong to $I_i\cap I_j$, $j\neq i$.  Smooth points were called ``generic" in \cite{Iliev}, and we denote this set $Z({\bf B}) _{reg}$.

Let 
$\lambda^* \in Z({\bf B})_{reg} $. The localized Bautin ideal $\mathcal B_{\lambda^*} $ defined by the polynomials  (\ref{kapteynideal}) is radical and generated by
\begin{itemize}
\item $\lambda_1, \lambda_3-\lambda_6$ if $\lambda^* \in I_1 $
\item $\lambda_1, \lambda_2,\lambda_5$ if $\lambda^* \in I_2 $ 
\item
$\lambda_1, \lambda_4, \lambda_5$ 
or  $\lambda^* \in I_3 $
\item $\lambda_1,   \lambda_5, \lambda_4+5\lambda_3-5\lambda_6, \lambda_3\lambda_6-2\lambda_6^2-\lambda^2_2$
 if $\lambda^* \in I_4 $ .
\end{itemize}
The exceptional divisor $E_{\lambda^*}$ is isomorphic to the corresponding exceptional divisor of one of  the blowups
\begin{align*}
\lambda & \to [\lambda_1: \lambda_3-\lambda_6] \\
\lambda & \to [\lambda_1: \lambda_2:\lambda_5]  \\
\lambda & \to [\lambda_1: \lambda_4:\lambda_5]  \\
\lambda & \to  [\lambda_1:  \lambda_5: \lambda_4+5\lambda_3-5\lambda_6: \lambda_3\lambda_6-2\lambda_6^2-\lambda^2_2 ]
\end{align*}
It is therefore straightforward to compute $E_{\lambda^*}$, and it turns out that it is a projective space, see table \ref{table1},
\begin{table}[htp]
\begin{center}
\begin{tabular}{|c|c|c|c|}
\hline
$\lambda^* \in I_1 $ & $\lambda^* \in I_2 $ & $\lambda^* \in I_3 $ & $\lambda^* \in I_4 $\\
\hline
$E_{\lambda^*} = \mathbb P^1$ & $E_{\lambda^*} = \mathbb P^2$ & $E_{\lambda^*} = \mathbb P^2$ &$ E_{\lambda^*} = \mathbb P^3$ \\
\hline
\end{tabular}
\end{center}
\caption{The exceptional divisor $E_{\lambda^*}$ when $\lambda^*$ is a smooth point.}
\label{table1}
\end{table}%
so the set of bifurcation functions is a vector space of dimension 
$2,3, 3$ and $4$, which is also the co-dimension of $I_1,I_2, I_3, I_4$  respectively. 
 The center set at $\lambda^*$, and its blow up along $E_{\lambda^*}$, are smooth, 
   so the arc space is easy to describe.
 An element of the Nash arc space $Arc( B_I\C^n,E_{\lambda^*})$, $I=\mathbf B$, $\lambda^*\in  Z({\bf B})_{reg} $ is an arc
\begin{eqnarray}
\label{arc11}
\begin{aligned}
\varepsilon \to \left( \lambda(\varepsilon), [v_1(\lambda(\varepsilon)):v_2(\lambda(\varepsilon)): \dots : v_6(\lambda(\varepsilon)) ] \right) \in 
\C^6 \times \mathbb P^5\\
\lambda(0)= \lambda^*
\end{aligned}
\end{eqnarray}
and it corresponds to the one-parameter family of vector fields
$
X_{\lambda(\varepsilon)} $.
The arc space $Arc( B_{\bf B}\C^n,E_{\lambda^*})$ has only one irreducible component corresponding to the irreducible smooth divisor $E_{\lambda^*}$. An essential family of arcs is  a family parameterized by $E_{\lambda^*}$ and having a minimal number of parameters. Thus, as essential family of arcs (\ref{arc11}) we can take 
\begin{equation}
\label{smooth}
\lambda(\varepsilon)= \lambda^* + \lambda^{(1)} \varepsilon, \; v_i=v_i(\lambda(\varepsilon))
\end{equation}
where
\begin{itemize}
\item $\lambda^{(1)} = (\lambda_{1,1},0,0,0,0,\lambda_{6,1})$ if $\lambda^* \in I_1 $
\item $\lambda^{(1)} = (\lambda_{1,1},\lambda_{2,1},0,0,\lambda_{5,1},0)$ if  $\lambda^* \in I_2 $
\item $\lambda^{(1)} = (\lambda_{1,1},0,0,\lambda_{4,1},,\lambda_{5,1},0)$ if  $\lambda^* \in I_3 $
\item $\lambda^{(1)} = (\lambda_{1,1},\lambda_{2,1},0,\lambda_{4,1},\lambda_{5,1},0)$ if  $\lambda^* \in I_4 $ .
\end{itemize}
The corresponding essential perturbations of the integrable quadratic vector field $X_{\lambda^*}$ take the form as in \cite[Theorem 6]{BGG}
\begin{itemize}
\item  $X_{\lambda^*} + \varepsilon \lambda_{1,1} (x \frac{\partial}{\partial x} +  y\frac{\partial}{\partial y})   +  \varepsilon {\lambda}_{6,1} y^2\frac{\partial}{\partial x} $
 if $\lambda^* \in I_1 $
\item
$X_{\lambda^*} + \varepsilon \lambda_{1,1} (x \frac{\partial}{\partial x} +  y\frac{\partial}{\partial y}) 
 + \varepsilon (2\lambda_{2,1} + \lambda_{5,1}) xy \frac{\partial}{\partial x}  + \varepsilon \lambda_{2,1}  (x^2-y^2) \frac{\partial}{\partial y}  
$
if  $\lambda^* \in I_2 $
\item 
$X_{\lambda^*} + \varepsilon \lambda_{1,1} (x \frac{\partial}{\partial x} +  y\frac{\partial}{\partial y}) +
 \varepsilon \lambda_{5,1} xy \frac{\partial}{\partial x}  +
 \varepsilon \lambda_{4,1}  xy \frac{\partial}{\partial y} 
$
 if  $\lambda^* \in I_3 $
\item 
$
X_{\lambda^*} + \varepsilon \lambda_{1,1} (x \frac{\partial}{\partial x} +  y\frac{\partial}{\partial y}) +
 \varepsilon (2 \lambda_{2,1} +\lambda_{5,1}) xy \frac{\partial}{\partial x} + 
 \varepsilon \lambda_{4,1} xy \frac{\partial}{\partial y} 
$
 if  $\lambda^* \in I_4 $ .
\end{itemize}
and the maximal order of the bifurcation function $M_k$ is $1$. 
We stress on the fact, that each of the above four essential perturbations depends upon six parameters, given by $\lambda^{(1)}$ and by $\lambda^*$.  The center $\lambda^*$ is therefore not fixed, but belongs to the corresponding smooth stratum $I_j$.

Finally, the deformations  (\ref{smooth}) form an irreducible component of the arc space, see Proposition \ref{mk}. Indeed,
it is obvious that a small deformation of (\ref{smooth}) leads to a family of the same form, under the condition that $\lambda^*$ is a smooth point. We conclude that the above families of vector fields  are irreducible components of the Nash arc space, which we denote for brevity
$$
Arc(I_1), Arc(I_2), Arc(I_3), Arc(I_4) .
$$

\subsection{Non-smooth points  of the center set $Z({\bf B})$ }
The base field in this section will be $\R$. 
The singular part $Z({\bf B})_{sing}$ of the real centre set has five irreducible components given by three co-dimension four planes 
 $I_1\cap I_2, I_1\cap I_3$ and $I_2\cap I_3$, and the reducible set
$I_4\cap I_2$. The latter is a union of two straight lines intersecting at the origin. As $I_2$ is a three-plane, then  the vector  spaces $I_1\cap I_2 , I_2\cap I_3, I_2 \cap I_4$ can be represented in $\R^3=I_2$ as on 
  fig. \ref{figuresat}. 
 $Z({\bf B})_{sing}$ is smooth, except along the line $I_1\cap I_2\cap I_3$. Thus, $Z({\bf B})_{sing}$ is a disjoint union of five smooth varieties 
 $$
 \{ I_1 \cap I_2 \} \setminus  \{ I_1\cap I_2 \cap I_3\}, \{ I_2 \cap I_3 \} \setminus \{ I_1\cap I_2 \cap I_3\}, \{ I_3 \cap I_1 \} \setminus  \{I_1\cap I_2 \cap I_3\} $$
 $$
  \{  I_1\cap I_2 \cap I_3\} \setminus \{ 0 \}, \{ 0 \} 
 $$
 which we consider separately. For brevity, and if there is no confusion,  we shall denote each of the above sets by 
 $$
 \{ I_1 \cap I_2 \}  , \{ I_2 \cap I_3 \}  , \{ I_3 \cap I_1 \}  ,
  \{  I_1\cap I_2 \cap I_3\}  , \{ 0 \} .
 $$
The exceptional divisor $E_{\lambda^*}$ in each of the five cases is presented on table \ref{table2} (this straightforward computation is omitted).
\begin{table}[htp]
\begin{center}
\begin{tabular}{|l|c|r| }
\hline
$\lambda^* \in I_1\cap I_3 $ & $\lambda^* \in I_2\cap I_1 $ & $\lambda^* \in I_2\cap I_3 $ \\
\hline
$E_{\lambda^*} = \mathbb P^3$ & $E_{\lambda^*} = \mathbb P^2$ & $E_{\lambda^*} = \mathbb P^2$    \\
\hline
\end{tabular}
\begin{tabular}{|l|c|r| }
\hline
 $\lambda^* \in I_2 \cap I_4$ & $\lambda^* \in I_1\cap I_2 \cap I_3 $ &$ \lambda^*=0  $\\
\hline
$ E_{\lambda^*} = \mathbb P^3$  & $ E_{\lambda^*} = \mathbb P^3$ &$E_{\lambda^*} = \mathbb P^3$    \\
\hline
\end{tabular}
\end{center}
\caption{The exceptional divisor $E_{\lambda^*}$ when $\lambda^* \in Z({\bf B})_{sing}$.}
\label{table2}
\end{table}%

\begin{figure}
\includegraphics[width=10cm]{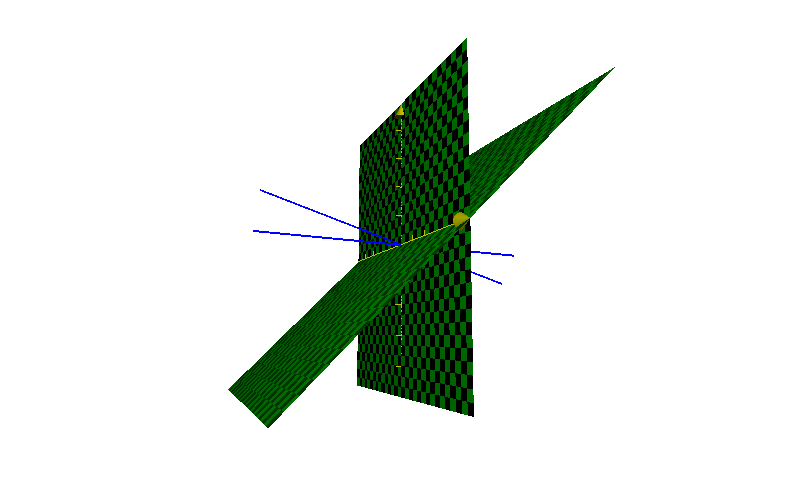}
\caption{The four irreducible components of the singular set  $\{I_2\cap I_1\} \cup \{I_2\cap I_3\} \cup \{I_2\cap I_4\} $  represented in $ \R^3=I_2$ .}
\label{figuresat}
\end{figure}

\subsubsection{ The essential perturbations of the center set $I_1\cap I_3$.} This is probably the most interesting case, for this reason we give more details.
Let $\lambda^*$ be a smooth point on the two-plane $I_1\cap I_3$ (that is to say, $\lambda^*\not\in  I_1\cap I_2 \cap I_3$).
As $\lambda_2^*\neq 0$, then the
 localized Bautin ideal $\mathcal B_{\lambda^*} $ defined by the polynomials  (\ref{kapteynideal}) is also generated by  
\begin{equation}
\label{v13}
 \lambda_1, \lambda_5(\lambda_3 - \lambda_6), \lambda_4^2 (\lambda_3-\lambda_6),  \lambda_4 (\lambda_3-\lambda_6)^2 .
\end{equation}
which will be therefore used on the place of $v_1,v_2,v_3,v_4$, in order to blow up  $\mathcal B_{\lambda^*} $. 
To the end of the section, instead of (\ref{blowup1}), we consider the blowup
\begin{align}
\lambda \to & [\lambda_1: \lambda_5(\lambda_3 - \lambda_6): \lambda_4^2 (\lambda_3-\lambda_6):  \lambda_4 (\lambda_3-\lambda_6)^2 ]
\end{align}
Using Proposition \ref{exceptional}  we verify first, that  $E_{\lambda^*} = \mathbb P^3$ and hence the vector space of bifurcation functions is of dimension $4$.
A general arc centred at a general point  $P\in E_{I_1\cap I_3}$, $E_{I_1\cap I_3} = \cup_{\lambda\in I_1\cap I_3} E_\lambda$, is defined by (\ref{arc1}), where $\lambda^* \in I_1\cap I_3$ and
\begin{eqnarray}
\label{arc1}
\begin{aligned}
\lambda_1&= \varepsilon^3  \lambda_{1,3} + O(\varepsilon^4) \\
\lambda_5 &= \varepsilon^2 \lambda_{5,2}  + O(\varepsilon^3)\\
\lambda_3-\lambda_6 &=  \varepsilon  (\lambda_{3,1}-\lambda_{6,1})+ O(\varepsilon^2)\\
\lambda_4 &=\varepsilon  \lambda_{4,1} + O(\varepsilon^2) 
\end{aligned}
\end{eqnarray}
with center
$$P=
[\lambda_{1,3}: \lambda_{5,2}(\lambda_{3,1} - \lambda_{6,1}): \lambda_{4,1}^2 (\lambda_{3,1}-\lambda_{6,1}):  \lambda_{4,1} (\lambda_{3,1}-\lambda_{6,1})^2 ] .
$$
Clearly, $P$ can take any value on $ \mathbb P^3$  except $[*:*:1:0]$ and $ [*:*:0:1]$. The missing point  $ [*:*:0:1]$ can be obtained as centre of the following arc
\begin{eqnarray}
\label{arc2}
\begin{aligned}
\lambda_1&=  \varepsilon^4\lambda_{1,4} + O(\varepsilon^5) \\
\lambda_5 &= \varepsilon^3 \lambda_{5,3} + O(\varepsilon^4)\\
\lambda_3-\lambda_6 &=  \varepsilon (\lambda_{3,1}-\lambda_{6,1})+ O(\varepsilon^2)\\
\lambda_4 &= \varepsilon^2  \lambda_{4,2}+ O(\varepsilon^3)
\end{aligned}
\end{eqnarray}
where
$$P=
[\lambda_{1,4}: \lambda_{5,3}(\lambda_{3,1} - \lambda_{6,1}): 0 :  \lambda_{4,2} (\lambda_{3,1}-\lambda_{6,1})^2 ] .
$$
Similar considerations are valid of course for the centres $[*:*:1:0]$, which shows  that  $E_{\lambda^*} = \mathbb P^3$. 
 
 It is easy to see,  that the family of arcs (\ref{arc1}) is an irreducible component of the arc space. Indeed, under a small deformation such that $\lambda^*\in I_1\cap I_3$, the degree of $\lambda_i(\varepsilon)$ neither decreases (the point $P$ is general) neither increases (because of Proposition \ref{mk}). We do not leave the family of arcs (\ref{arc1}). If we allow $\lambda^* \in I_3, \lambda\not\in I_1$ then we note that the dimension of $E_{\lambda^*}$ drops by one, and taking a limit $\lambda^* \to I_1\cap I_3$
  we can not obtain $E_{\lambda^*} = \mathbb P^3$.

 Therefore (\ref{arc1}) is an irreducible component of the arc space 
 $$Arc( B_{\bf B},E_{I_1\cap I_3}), E_{I_1\cap I_3} = \cup_{\lambda\in I_1\cap I_3} E_\lambda .
 $$
 The family
 (\ref{arc2}), however, is not an irreducible component, as it belongs to the closure of the family (\ref{arc1}).
To see this we need to show that every arc  (\ref{arc2}) is a continuous limit of arcs of the form (\ref{arc1}). Recall that we deform arcs on the blowup surface 
$Arc( B_{\bf B},E_{\lambda^*})$, see (\ref{arc11}), and continuity 
of the  deformation means that the coefficients depend\emph{ analytically} upon the deformation parameters \cite{joko16, LR}.
Consider now the family of arcs of the type (\ref{arc3}), parameterized by $\delta\neq 0$
\begin{eqnarray}
\label{arc3}
\begin{aligned}
\lambda_1&= \varepsilon^3(\varepsilon + \delta) (\lambda_{1,3} + \dots )\\
\lambda_5 &=  \varepsilon^2(\varepsilon + \delta)(\lambda_{5,2} + \dots )\\
\lambda_3-\lambda_6 &= \varepsilon  (\lambda_{3,1}-\lambda_{6,1}+ \dots ) \\
\lambda_4 &= \varepsilon(\varepsilon + \delta) (\lambda_{4,1} + \dots )
\end{aligned}
\end{eqnarray}
where the dots replace some convergent series vanishing for $\varepsilon = 0$. It follows from (\ref{arc3}) that
\begin{align*}
 &[\lambda_1: \lambda_5(\lambda_3 - \lambda_6) : \lambda_4^2 (\lambda_3-\lambda_6):  \lambda_4 (\lambda_3-\lambda_6)^2 ] = \\
= & \; [\lambda_{1,3}   +O(\varepsilon) : \lambda_{5,2}(\lambda_{3,1} - \lambda_{6,1})  +O(\varepsilon) \\
& : \lambda_{4,1}^2 (\lambda_{3,1}-\lambda_{6,1})\delta   +O(\varepsilon) :  \lambda_{4,1} (\lambda_{3,1}-\lambda_{6,1})^2    +O(\varepsilon) ] 
\end{align*}
where $O(\varepsilon) $ is also analytic in $\delta$ and $\lambda_{i,j}$. This family of arcs depends continuously on $\delta$ in the topology of the arc space and hence the limit $\delta\to 0$ can be taken. The center of (\ref{arc3}) is the point
$$
P=
[\lambda_{1,3}  : \lambda_{5,2}(\lambda_{3,1} - \lambda_{6,1})  : \lambda_{4,1}^2 (\lambda_{3,1}-\lambda_{6,1})\delta  :  \lambda_{4,1} (\lambda_{3,1}-\lambda_{6,1})^2 ] .
$$
which tends to
$$P=
[\lambda_{1,3}: \lambda_{5,2}(\lambda_{3,1} - \lambda_{6,1}): 0 :  \lambda_{4,1} (\lambda_{3,1}-\lambda_{6,1})^2 ] .
$$
as $\delta$ tends to $0$. Thus any arc of type (\ref{arc2}) can be obtained as a continuous limit of an arc of type (\ref{arc1}), and hence belongs to the same irreducible component of the arc space.

To resume, the above considerations show that the families
 of  vector fields $X_{\lambda^*}$, $\lambda^* \in I_1\cap I_3$   :
\begin{itemize}
\item  
$X_{\lambda^*} + 
\varepsilon^3 \lambda_{1,3} (x \frac{\partial}{\partial x} +  y\frac{\partial}{\partial y})  \\
+ \varepsilon^2 \lambda_{5,2} xy \frac{\partial}{\partial x}
+ \varepsilon   \lambda_{6,1}y^2 \frac{\partial}{\partial x}
+ \varepsilon  \lambda_{4,1} xy \frac{\partial}{\partial y}$
\item
$X_{\lambda^*} + 
\varepsilon^4 \lambda_{1,4} (x \frac{\partial}{\partial x} +  y\frac{\partial}{\partial y})  \\
+ \varepsilon^3\lambda_{5,3} xy \frac{\partial}{\partial x}
+ \varepsilon   \lambda_{6,1}y^2 \frac{\partial}{\partial x}
+ \varepsilon^2 \lambda_{4,2} xy \frac{\partial}{\partial y}$
\item
$X_{\lambda^*} + 
\varepsilon^4 \lambda_{1,4} (x \frac{\partial}{\partial x} +  y\frac{\partial}{\partial y})  \\
+ \varepsilon^2\lambda_{5,2} xy \frac{\partial}{\partial x}
+ \varepsilon^2   \lambda_{6,2}y^2 \frac{\partial}{\partial x}
+ \varepsilon \lambda_{4,1} xy \frac{\partial}{\partial y}$
 \end{itemize}
 represent one irreducible component of the Nash  space of arcs, centred at $\cup_{\lambda \in I_1\cap I_3} E_{\lambda}$.
 Therefore, this is an essential perturbation in the sense of Definition \ref{essentialperturbation}. For brevity, we denote this irreducible component
 $$Arc(I_1\cap I_3) .$$
Note that maximal order of the bifurcation function $M_k$ is   $4$ (contrary to what is affirmed in \cite{BGG}). In \cite{Iliev2}, \cite{Ma}, \cite{Zoladek2}, it is shown that the bifurcation functions are linear combinations of four integrals. The authors compute the bifurcation function up to order three. It is interesting to note that the bifurcation functions of order three (represented by the family of arcs $(\ref{arc1})$) do not cover all possible linear combinations. It is indeed necessary to go to the order four to cover all these combinations (represented by $(\ref{arc2})$ and $(\ref{arc3})$). 

\subsubsection{Perturbations of the  center set $\{I_2\cap I_1\} \cup \{I_2\cap I_3\} \cup \{I_2\cap I_4\} $, see fig.\ref{figuresat} }
In this section we note that the arc spaces corresponding to these sets are in the closure of $$Arc(I_1),Arc(I_2), Arc(I_3), Arc (I_4).$$ Thus, there will be no new essential perturbations in our list.

As $I_2$ is a three-dimensional real plane, then we can represent this singular set in $\R^3= I_2$ as on figure \ref{figuresat}.
Recall that according to our convention we assume that $\lambda^*\not\in I_1\cap I_2 \cap I_3$. 
 The localized Bautin ideal $\mathcal B_{\lambda^*} $ defined by the polynomials  (\ref{kapteynideal}) is  generated by
\begin{itemize}
\item $ \lambda_1, \lambda_5 (\lambda_3-\lambda_6) , \lambda_2 (\lambda_3-\lambda_6)$  if $\lambda^* \in I_2\cap I_1$
\item $\lambda_1, \lambda_5,\lambda_2\lambda_4$  if    $\lambda^* \in I_2 \cap I_3 $
\item $\lambda_1, \lambda_5, \lambda_2 (\lambda_4+5\lambda_3-5 \lambda_6) ,\lambda_2(\lambda_3\lambda_6-2\lambda_6^2-\lambda_2^2)$ if $\lambda^* \in I_2\cap I_4 $ .
\end{itemize}
and we consider instead of (\ref{blowup1}), the maps
\begin{align*}
\lambda \to & [\lambda_1: \lambda_5 (\lambda_3-\lambda_6) :\lambda_2 (\lambda_3-\lambda_6)] \\
\lambda \to & [\lambda_1: \lambda_5:\lambda_2\lambda_4]\\
\lambda \to & [ \lambda_1: \lambda_5:\lambda_2 (\lambda_4+5\lambda_3-5 \lambda_6) : \lambda_2(\lambda_3\lambda_6-2\lambda_6^2-\lambda_2^2)]
\end{align*}

As in the case $I_1\cap I_3$, we verify that the exceptional divisor $E_{\lambda^*}$ is equal to $\mathbb P^2, \mathbb P^2$ and $\mathbb P ^3$ respectively.

Consider first the perturbations   (\ref{arc11}), where $\lambda^* \in I_2\cap I_1$ and
\begin{eqnarray}
\label{arc21}
\begin{aligned}
\lambda_1&= \varepsilon^2  \lambda_{1,2} + O(\varepsilon^3) \\
\lambda_2 &=\varepsilon  \lambda_{2,1} + O(\varepsilon^2) \\
\lambda_5 &= \varepsilon \lambda_{5,1}  + O(\varepsilon^2)\\
\lambda_3-\lambda_6 &=  \varepsilon  (\lambda_{3,1}-\lambda_{6,1})+ O(\varepsilon^2)
\end{aligned}
\end{eqnarray}
with center
$$P=
[\lambda_{1,2}: \lambda_{5,1}(\lambda_{3,1} - \lambda_{6,1}): \lambda_{2,1}(\lambda_{3,1} - \lambda_{6,1}) ] .
$$
A continuous deformation of this family of arcs is
\begin{eqnarray}
\label{arc21def}
\begin{aligned}
\lambda_1&= \varepsilon (\varepsilon + \delta) [\lambda_{1,2} + O(\varepsilon)] \\
\lambda_2 &=(\varepsilon+\delta) [\lambda_{2,1} + O(\varepsilon)]\\
\lambda_5 &=(\varepsilon+\delta) [\lambda_{5,1} + O(\varepsilon)]\\
\lambda_3-\lambda_6 &=  \varepsilon  [(\lambda_{3,1}-\lambda_{6,1})+ O(\varepsilon)]
\end{aligned}
\end{eqnarray}
which shows that the family (\ref{arc21}) is in the closure of $Arc(I_1)$. Thus the exceptional divisor $E_{I_1\cap I_2}$ is ``described" by the closure of $Arc(I_1)$ and there is no new essential deformation here.

The case  $\lambda^* \in I_2\cap I_3$ and
\begin{eqnarray}
\label{arc23}
\begin{aligned}
\lambda_1&= \varepsilon^2  \lambda_{1,2} + O(\varepsilon^3) \\
\lambda_2 &=\varepsilon  \lambda_{2,1} +  O(\varepsilon^2) \\
\lambda_4 &=\varepsilon  \lambda_{4,1} +  O(\varepsilon^2) \\
\lambda_5 &= \varepsilon^2 \lambda_{5,2}  +  O(\varepsilon^3)
\end{aligned}
\end{eqnarray}
with center
$$P=
[\lambda_{1,2}: \lambda_{5,2}: \lambda_{2,1}\lambda_{4,1} ] 
$$
is studied similarly, it belongs to the closure of $Arc(I_2)$ and $Arc(I_3)$. Finally, we consider the case 
$$\lambda^* = (\lambda_{1,0},\lambda_{2,0},\lambda_{3,0},\lambda_{4,0}, \lambda_{5,0}, \lambda_{6,0}) \in I_2\cap I_4$$
and hence
$$
\lambda_{1,0}=\lambda_{2,0}= \lambda_{5,0} = \lambda_{4,0}+5\lambda_{3,0}-5 \lambda_{6,0}
=
\lambda_{6,0} (\lambda_{3,0} -2\lambda_{6,0} )= 0 .
$$
We have to consider therefore two cases  : $\lambda_{6,0}=0$ or
$\lambda_{3,0} -2\lambda_{6,0}= 0$ (corresponding to the two irreducible components of $I_2 \cap I_4$ ). Suppose for instance $\lambda_{6,0}=0$

The family of perturbations
\begin{eqnarray}
\label{arc24}
\begin{aligned}
\lambda_1&= \varepsilon^2  \lambda_{1,2} + O(\varepsilon^3) \\
\lambda_5 &= \varepsilon^2 \lambda_{5,2}  +  O(\varepsilon^3)\\
\lambda_2 &=\varepsilon  \lambda_{2,1} +  O(\varepsilon^2) \\
\lambda_6 &=\varepsilon  \lambda_{6,1} +  O(\varepsilon^2) \\
\lambda_4+5\lambda_3-5 \lambda_6 & = \varepsilon (\lambda_{4,1}+5\lambda_{3,1}-5 \lambda_{6,1} ) +  O(\varepsilon^2)  \\
\lambda_3\lambda_6-2\lambda_6^2-\lambda_2^2 & =
\varepsilon \lambda_{3,0} \lambda_{6,1}+ O(\varepsilon^2)
\end{aligned}
\end{eqnarray}
with center
$$P=
[\lambda_{1,2}: \lambda_{5,2}: \lambda_{2,1}(\lambda_{4,1}+5\lambda_{3,1}-5 \lambda_{6,1} ):   \lambda_{2,1}  \lambda_{3,0} \lambda_{6,1} ] .
$$
describes the exceptional divisor $E_{\lambda^*} = \mathbb P^3$.

Consider a small deformation in $\delta$ of the initial family (\ref{arc24}) which is of the form
\begin{eqnarray}
\label{arc24def}
\begin{aligned}
\lambda_2 &= (\varepsilon + \delta)[\lambda_{2,1} +  O(\varepsilon)] \\
\lambda_3\lambda_6-2\lambda_6^2-\lambda_2^2 & =\varepsilon[ \lambda_{3,1} \lambda_{6,1}-2\lambda_{6,1}^2-\lambda_{2,1}^2 + 0(\delta)]+ O(\varepsilon^2)\\
\lambda_4+5\lambda_3-5 \lambda_6 & = \varepsilon (\lambda_{4,1}+5\lambda_{3,1}-5 \lambda_{6,1} ) +  O(\varepsilon^2)  \\
\lambda_1&= \varepsilon (\varepsilon + \delta) [ \lambda_{1,2} + O(\varepsilon)] \\
\lambda_5 &= \varepsilon (\varepsilon + \delta) [ \lambda_{5,2} + O(\varepsilon)] 
\end{aligned}
\end{eqnarray}
The family (\ref{arc24def}) induces a continuous deformation of (\ref{arc11}) which shows that  (\ref{arc24}) is in the closure of  (\ref{arc24def}). The case 
$\lambda_{3,0} -2\lambda_{6,0}= 0$, $\lambda_{6,0}\neq 0$ is analogous.
We conclude that  $E_{I_2\cap I_4}$ is described by (the closure of)  $Arc(I_4)$, and there is no new essential perturbation here.
\subsubsection{  Perturbations of the  center set $\{I_1\cap I_2 \cap I_3\}$ }
The center set  
$$\{I_1\cap I_2 \cap I_3\} = \{ \lambda_1= \lambda_2= \lambda_3-\lambda_6 = \lambda_4=\lambda_5= 0 \}
$$
 is a straight line and we assume that $\lambda_3\neq 0$. The
 localized Bautin ideal $\mathcal B_{\lambda^*} $ defined by the polynomials  (\ref{kapteynideal}) is also generated by  
\begin{equation}
\lambda_1,
\lambda_5(\lambda_3-\lambda_6),
\lambda_2\lambda_4(\lambda_3-\lambda_6)(\lambda_4+5\lambda_3-5\lambda_6),
\lambda_2\lambda_4(\lambda_3-\lambda_6)^2
\end{equation}
or equivalently
\begin{equation}
\label{v123}
\lambda_1,
\lambda_5(\lambda_3-\lambda_6),
\lambda_2\lambda_4^2(\lambda_3-\lambda_6),
\lambda_2\lambda_4(\lambda_3-\lambda_6)^2
\end{equation}
Therefore, instead of (\ref{blowup1}), we use the map
\begin{align}
\lambda \to [\lambda_1: \lambda_5(\lambda_3-\lambda_6):
\lambda_2\lambda_4^2(\lambda_3-\lambda_6):
\lambda_2\lambda_4(\lambda_3-\lambda_6)^2 ] .
\end{align}

The family  (\ref{arc11}), where $\lambda^* \in I_1\cap I_2\cap I_3$ and induced by
\begin{eqnarray}
\label{arc123}
\begin{aligned}
\lambda_1&= \varepsilon^4  \lambda_{1,4} + O(\varepsilon^5) \\
\lambda_5 &= \varepsilon^3 \lambda_{5,3}  +  O(\varepsilon^4)\\
\lambda_2 &=\varepsilon  \lambda_{2,1} +  O(\varepsilon^2) \\
\lambda_4 &=\varepsilon  \lambda_{4,1} +  O(\varepsilon^2) \\
\lambda_3-\lambda_6 &=\varepsilon ( \lambda_{3,1} -\lambda_{6,1})+  O(\varepsilon^2) 
\end{aligned}
\end{eqnarray}
with center
$$P=
[\lambda_{1,4}: \lambda_{5,3}(\lambda_{3,1}- \lambda_{6,1} ):   \lambda_{2,1} \lambda_{4,1}^2(\lambda_{3,1}- \lambda_{6,1} ):
 \lambda_{2,1} \lambda_{4,1}(\lambda_{3,1}- \lambda_{6,1} )^2] 
$$
describes an open dense subset of $E_{\lambda^*}= \mathbb P^3$. 

There is a continuous deformation of (\ref{arc11}) induced by the deformation 
\begin{eqnarray}
\label{arc123def}
\begin{aligned}
\lambda_1&= \varepsilon^3(\varepsilon + \delta)[\lambda_{1,4} + O(\varepsilon)]\\
\lambda_5 &= \varepsilon^2(\varepsilon+\delta) [\lambda_{5,3}  + O(\varepsilon)]\\
\lambda_2 &=(\varepsilon +\delta)[\lambda_{2,1} + O(\varepsilon)] \\
\lambda_4 &=\varepsilon  \lambda_{4,1} + O(\varepsilon^2) \\
\lambda_3-\lambda_6 &=\varepsilon ( \lambda_{3,1} -\lambda_{6,1})+  O(\varepsilon^2) 
\end{aligned}
\end{eqnarray}
which shows that this family (\ref{arc123}) belongs to the closure of $Arc(I_1\cap I_3)$, and again there is no new essential perturbation.
\subsubsection{Perturbations of the linear center $\lambda^*=0$}
Consider finally the singular point $\lambda^* = (0,\dots,0)$ on the center set $Z(\mathbf B)$, which corresponds to the linear center
$$
X_0= - y  \frac{\partial}{\partial x} + x  \frac{\partial}{\partial y} .
$$
This point is the intersection of the four centre sets  $I_1,I_2,I_3$ and $I_4$, and we shall show that  
 $Arc(I_1\cap I_2\cap I_3 \cap I_4)$ is in the closure of $Arc(I_1\cap I_2\cap I_3)$.

Let us recall, that the localized Bautin ideal at the origin is generated by the polynomials
\begin{align*}
v_1(\lambda)&=\lambda_1,\\
v_2(\lambda)&=\lambda_5(\lambda_3-\lambda_6),\\
v_3(\lambda)&=\lambda_2\lambda_4(\lambda_3-\lambda_6)(\lambda_4+5\lambda_3-5\lambda_6),\\
v_4(\lambda)&=\lambda_2\lambda_4(\lambda_3-\lambda_6)^2(\lambda_3\lambda_6-2\lambda_6^2-\lambda_2^2)
\end{align*}

The family  (\ref{arc11}) 
\begin{align*}
\varepsilon \to \left( \lambda(\varepsilon), [v_1(\lambda(\varepsilon)):v_2(\lambda(\varepsilon)): \dots : v_6(\lambda(\varepsilon)) ] \right) 
\end{align*}
induced by 
\begin{eqnarray}
\label{arc000}
\begin{aligned}
\lambda_1&= \varepsilon^6  \lambda_{1,6} +O(\varepsilon^7)\\
\lambda_2 &=\varepsilon  \lambda_{2,1} + O(\varepsilon^2) \\
\lambda_3 &=\varepsilon  \lambda_{3,1} +O(\varepsilon^2) \\
\lambda_4 &=\varepsilon  \lambda_{4,1} + O(\varepsilon^2) \\
\lambda_5 &= \varepsilon^5 \lambda_{5,5}  +O(\varepsilon^6)\\
\lambda_6 &=\varepsilon  \lambda_{6,1} + O(\varepsilon^2) \\
\lambda_4+5\lambda_3-5 \lambda_6 & = \varepsilon^3 (\lambda_{4,3}+5\lambda_{3,3}-5 \lambda_{6,3} ) + O(\varepsilon^4) 
\end{aligned}
\end{eqnarray}
with center
$$P=
[\lambda_{1,6}: \lambda_{5,5}(\lambda_{3,1}- \lambda_{6,1} ):   \lambda_{2,1} \lambda_{4,1}(\lambda_{3,1}- \lambda_{6,1} )(\lambda_{4,3} +5\lambda_{3,3}-5\lambda_{6,3})
$$
$$
: \lambda_{2,1} \lambda_{4,1}(\lambda_{3,1}- \lambda_{6,1} )^2 ( \lambda_{3,1} \lambda_{6,1}-2\lambda_{6,1}^2-\lambda_{2,1}^2)] 
$$
describes an open subset of the exceptional divisor $E_{\lambda^*} = \mathbb P^3$. The closure of the set of centres $P$ equals   $E_{\lambda^*}$. 

We shall show that a general arc ( induced by ) (\ref{arc000}) has a suitable deformation  $ (\ref{arc000}^{\delta})$, continuous in the topology of the Nash arc space, which is  of the form (\ref{arc123}).
It is moreover a continuous deformation in the sense of the arc space topology.

We define first $\lambda_3^\delta =\lambda_3(\varepsilon)+\delta, \lambda_6^\delta = \lambda_6(\varepsilon)+\delta$, and $\lambda_2^\delta = \lambda_2$.
As $\lambda_3\lambda_6-2\lambda_6^2-\lambda_2^2$ as a power series in $\varepsilon$ has a double zero at $\varepsilon=0$, then
 we obtain
\begin{align*}
 \lambda_3^\delta \lambda_6^\delta- 2 (\lambda_6^\delta)^2-\lambda_2^2 
& = 
(\lambda_3+\delta)(\lambda_6+\delta)-2(\lambda_6+\delta)^2-\lambda_2^2 &  \\
&= 
[\varepsilon^2 + \varepsilon_1(\delta) \varepsilon + \varepsilon_2(\delta) ] 
[p_2(\delta) + O(\varepsilon)]
\end{align*}
where $\varepsilon_1(\delta), \varepsilon_2(\delta), p_2(\delta)$ are analytic functions in $\delta$, and 
$$\varepsilon_1(0)=\varepsilon_2(0)=0,\; p_2(0)= \lambda_{3,1} \lambda_{6,1}-2\lambda_{6,1}^2-\lambda_{2,1}^2. $$
We  define $\lambda_{4}^\delta$ in such a way, that
\begin{align*}
\lambda_4^\delta+5\lambda_3-5 \lambda_6 & = \varepsilon [\varepsilon^2 + \varepsilon_1(\delta) \varepsilon + \varepsilon_2(\delta) ]  [p_1(\delta)  + O(\varepsilon)] 
\end{align*}
where $p_1(\delta)$ is analytic in $\delta$ and
$$
p_1(0)= \lambda_{4,3}+5\lambda_{3,3}-5 \lambda_{6,3}  .
$$
Finally, the power series $X^\delta, \lambda_5^\delta$ are defined similarly by the conditions
\begin{align*}
X^\delta & = \varepsilon^4 [\varepsilon^2 + \varepsilon_1(\delta) \varepsilon + \varepsilon_2(\delta) ] [\lambda_{1,6}^\delta +O(\varepsilon)] \\
\lambda_5^\delta & =  \varepsilon^3 [\varepsilon^2 + \varepsilon_1(\delta) \varepsilon + \varepsilon_2(\delta) ] [\lambda_{5,5}^\delta +0(\varepsilon)] 
\end{align*}
where $\lambda_{1,6}^\delta, \lambda_{5,5}^\delta$ depend analytically in $\delta$ and  $\lambda_{1,6}^0= \lambda_{1,6}, \lambda_{5,5}^0= \lambda_{5,5} $.
The $\delta$-family of arcs $ (\ref{arc000}^{\delta})$ induced by the power series $\delta \to \lambda_i^\delta$ has a center
$$P^\delta=
[\lambda_{1,6}^\delta: \lambda_{5,5}^\delta(\lambda_{3,1}- \lambda_{6,1} ):   \lambda_{2,1} \lambda_{4,1}^\delta(\lambda_{3,1}- \lambda_{6,1} )p_1(\delta)
$$
$$
: \lambda_{2,1} \lambda_{4,1}^\delta(\lambda_{3,1}- \lambda_{6,1} )^2 p_2(\delta)]  .
$$

 This completes the proof that $Arc(I_1\cap I_2\cap I_3 \cap I_4)$ is in the closure of $Arc(I_1\cap I_2\cap I_3)$, so there is no new essential perturbation again.

To the end of this section we discuss the bifurcation functions in the quadratically perturbed linear center in the context of the inclusion
\begin{equation}
\label{inclusion}
Arc(I_1\cap I_2\cap I_3 \cap I_4)\subset Arc(I_1\cap I_2\cap I_3) .
\end{equation}
The set $I_1\cap I_2\cap I_3 \cap I_4$ is just one point (the linear center), $I_1\cap I_2\cap I_3$ is a two-plane representing "Hamiltonian triangles", that is to say, Hamiltonian systems in which the Hamiltonian is a product of three linear factor. The inclusion (\ref{inclusion}) means that a bifurcation function of the perturbed linear centre is either a limit, or a limit of derivatives of bifurcation functions, related to the Hamiltonian triangle case. Recall, that in the Hamiltonian triangle case, we have three bifurcation functions which are complete elliptic integrals of first, second and third kind, and the fourth one is an iterated integral of length two \cite{Iliev}. After "taking the limit" the Hamiltonian takes the form $h=x^2+y^2$ and the genus of integrals drop to zero. As we shall see, they become polynomials of degree at most four in $h$, vanishing at the origin. This is the content of the classical theorem of Zoladek \cite[Theorem 4] {Zoladek1} which we recall now. 
Denote by $\mathcal P $ the Poincaré return map associated to the perturbed linear center, parameterized by the Hamiltonian $h=\frac{x^2+y^2}{2}$. Then
$$
\mathcal P (h) - h = 2\pi v_1 h (1+O(\lambda)) + v_2 h^2 (1+O(\lambda)) + v_3 h^3 (1+O(\lambda)) + v_4 h^4 (1+O(\lambda)) .
$$
By abuse of notations here $v_2,v_3,v_4$ are the polynomials above, but up to multiplication by a non zero constant.

$ O(\lambda)$ means a convergent power series in $h$ whose coefficients are analytic in $v_1$, polynomial in $v_2,v_3,v_4$ and belong to the ideal generated by $v_1,v_2,v_3,v_4$ in
$\R [v_2,v_3,v_4]\{v_1\}$. This  last property is crucial for the computation of the bifurcation functions. We conclude that every bifurcation function is a polynomial of the form
$c_1 h+ c_2 h^2 + c_3 h^3 + c_4 h^4$.
\vskip 1pt
In \cite{Iliev1}, Iliev extended Zoladek's theorem to perturbations of the harmonic oscillator of any degree using the algorithm of \cite{F}. A complete presentation of this result is reproduced in the book \cite{P}. In this book, on page 474, the author writes: 
\begin{quotation}
we believe that every row in table 1 will stabilize at some value $N(n)$ for all $k\geq K(n)$
\end{quotation}
This is indeed a consequence of the 
Theorem $5$ of our article. It applies as well to the perturbation of the Bogdanov-Takens Hamiltonian and the table 4.2, page 477.

\section{Conclusion and perspectives}

To conclude, we resume the main new points of our approach and discuss further possible developments, for instance, to the local Hilbert's 16th problem on a period annulus for polynomial perturbations of any degree.\\

1-In this article, we have represented the set of bifurcation functions (Melnikov functions of any order) by the exceptional divisor $E$ of the canonical blow-up of the Bautin ideal (cf. Proposition $3$) and define the corresponding Iliev number $k$ , Definition \ref{def1}. In the particular case of the Kapteyn normal form of quadratic deformations, we have checked that this set is always a vector space (or equivalently that $E_\lambda$ is always a projective space). Is it true in general for a polynomial perturbation of any degree?\\

2-Our setting allows a quick and systematic computation of the maximal order of the bifurcation function. It does not provide of course {\it a priori} information on the number of zeros of this bifurcation function. Many other techniques have been developped for solving this final step. Finding an explicit integral and an integrating factor for the perturbed center allows to represent the bifurcation function as an (iterated) integral over the level set of the first integral (cf. \cite{F}, \cite{Gavrilov2}). In the known cases, this bifurcation function is a solution of a differential system and techniques like Chebycheff systems, argument principle can be used. For instance, this number of limit cycles is $2$ for generic perturbations of a Hamiltonian in $I_3$ (\cite{Gavrilov3}). It is $2$ in the perturbation of a generic Lotka-Volterra system in $I_1$ and $3$ for the Hamiltonian triangle in $I_1\cap I_3$ (\cite{Iliev2, Ma, Zoladek2}), it is less than $5$ in the case of generic perturbation of $I_4$ (\cite{GI2, Z}). The case of the perturbation of a generic reversible center in $I_2$ is still open. By our setting, we know that computation of the first order bifurcation function is enough in that case.\\

In view of our computations, we conjecture that if there is  a uniform bound $N$ for the number of zeros of the bifurcation functions associated to a family of arcs $A$, then the number of zeros of the bifurcation functions associated to arcs which belong to the closure $\overline{A}$ of $A$ is also less than $N$.  \\

For the Hamiltonian non-generic cases, the intersection $I_2\cap I_3$ has been fully covered by many contributions including \cite{Iliev4,LL3}, \cite{HI, CLL}, as well as the intersection $I_1\cap I_3$, \cite{ZLL, ZZ} and the cyclicity is $2$, except for the Hamiltonian triangle \cite{Iliev2}.\\

For the generic cases of $I_1\cap I_2$, the bound is $2$ (cf. \cite{LL2}).\\

Henryk Zoladek conjectured in (\cite{Zoladek1}, p. 244) that
\begin{quotation}
The maximal number of limit cycles appearing after perturbation of the system with center as a function of a point of the center manifold is equal to the maximum of the values of the number of zeroes of bifurcation function in a neighborhood of the point in the center manifold.
\end{quotation}
 This conjecture can be reformulated in restriction to each components of the Nash space of arcs $ Arc(B_I\C^n,E)$.\\

3-Our setting is well adapted to discuss the confluence phenomenon that we mention in the beginning of paragraph $4$. To explain it with more details, let us consider first a smooth point $p$ of the stratum $I_2$ of the centre manifold. Corresponding to the center $p$, there is an associated logarithmic integral $H$ and an integrating factor $M$. It is enough to consider a bifurcation function of order $1$ and it can be represented as a Melnikov-Pontryagin integral over the closed level sets $H=h$. For the essential perturbation defined in $4.1$, this bifurcation function can be written as:

\begin{equation}
\label{I2}
M_1(h)=\lambda_1J_1(h)+\lambda_2 J_2(h)+\lambda_5 J_5(h).
\end{equation}

The set of generic reversible centers can be parametrized by (after a scaling of coordinates, assume $B=2$, then $A=a\in\mathbb{R}, C=b\in\mathbb{R}$):

\begin{equation}
\label{parI_2}
\dot{z}=-{\rm i}z+az^2+2\mid z\mid^2+b\overline{z}^2.
\end{equation}

Note that $LV\cap R$ or $I_1\cap I_2$, cannot be described in this chart. The intersection $I_2\cap I_3$ is given by $a=-1$ and the intersection $I_2\cap I_4$ by $a=4, b=\pm 2$.

Consider now, generic centers on $I_2\cap I_3$. We have checked that the family of arcs (\ref{arc23}) can be used. This means that bifurcation function of second order are enough and that the associated arcs can be described as limits of arcs for $I_2$ (or $I_3$). Explicit computations of Iliev show that when $a=-1$, the integral $I_5$ vanishes and that the second order bifurcation function can be choosen as:

\begin{equation}
\label{I2I3}
M_2(h)=\lambda_1J_1(h)+\lambda_2 J_2(h)+\lambda_5 \frac{d}{da}J_5(h)\mid_{a=-1}.
\end{equation}

This is an example of what could be called a confluence phenomenon. Consider next, centers on $I_2\cap I_4$. In that case, we have shown that the arcs (\ref{arc24}) can be used, and in particular that second-order bifurcation functions are enough. But we have also shown that the family of arcs (\ref{arc24}) can be represented as a limit of the family of arcs (\ref{arc24def}), which are of the type associated with $I_4$. The explicit computation made by Iliev in that case matches the deformation of arcs and yields:

\begin{equation}
\label{I2I4}
M_2(h)=\lambda_1J_1(h)+\lambda_2 J_2(h)+[\lambda_4 \frac{d}{db}J_5(h)+\lambda_5 \frac{d}{da}J_5(h)]\mid_{a=4, b=\pm 2}.
\end{equation}

Similar computations can be made in the case $I_1\cap I_2$. A center which is in $LV$ corresponds to $B=0$, $A=1$ (after a scaling) and $C=b+{\rm i}c$:

\begin{equation}
\label{parI_1}
\dot{z}=-{\rm i}z+z^2+(b+{\rm i}c)\overline{z}^2.
\end{equation}

Consider a smooth point $p$ of the stratum $I_1$ of the centre manifold. Corresponding to the center $p$, there is an associated logarithmic integral $H$ and an integrating factor $M$. It is enough to consider a bifurcation function of order $1$ and it can be represented as a Melnikov-Pontryagin integral over the closed level sets $H=h$. For the essential perturbation defined in $4.1$, this bifurcation function can be written as:

\begin{equation}
\label{I_1}
M_1(h)=\lambda_1J_1(h)+\lambda_2 J_2(h)+\lambda_3 J_3(h).
\end{equation}

The generic centers of $I_1\cap I_2$ corresponds to $c=0$. Iliev showed that $J_2(h)$ vanishes on $c=0$. The deformation of arcs (\ref{arc21}) into (\ref{arc21def}) corresponds to the confluence observed by Iliev  \cite[Corollary 1]{Iliev} :

\begin{equation}
\label{I_2}
M_2(h)=\lambda_1J_1(h)+\lambda_2 \frac{d}{dc} J_2(h)\mid_{c=0}+\lambda_3 J_3(h).
\end{equation}\\

4- We expect many further developements, for instance for Abel equations \cite{BFY}, or quadratic double centers \cite{Andronova1,Andronova2,Gentes1,Gentes2,Iliev5, LL}, where the Bautin ideal is explicitly known.

\appendix
\section{Logarithmic First Integral and integrating Factor}

For completeness, we reproduce bellow  the original result of Dulac, classifying centers of quadratic plane vector fields. We give then its modern geometric counterpart  - Theorem \ref{dulac}.
Namely, consider the 12-dimensional vector space $Q$ of polynomial one-forms $$\omega = P(x,y) dx + Q(x,y) dy$$ where $P,Q$ are polynomials of degree two.
Each $\omega$ defines a quadratic vector field $$X = Q\frac{\partial}{\partial x} - P \frac{\partial}{\partial y} .$$ Suppose that $X$ (or $\omega$) is real and has a a center. In this case, near the center critical point in $\R^2$ we have an analytic first integral having a Morse critical point. More generally, we say that a complex analytic plane vector field $X$ (or $\omega$) has a Morse critical point, provided that in a neighbourhood of some singular point it has an analytic first integral with Morse critical point. This notion generalizes the notion of a center, and has a meaning for vector fields with complex coefficients. The Dulac's Theorem classifies complex quadratic vector fields having a Morse critical point. A modern account of this  is given in Cerveau and Lins Neto \cite{cerlin96}, and we reproduce it here\\
{\bf Theorem}(Dulac \cite{Dulac})
\emph{
Let $X$ be a complex quadratic vector field with associated one-form $\omega$.  
$X$ has a Morse critical point, if and only if $\omega$ falls in one of the following 12 cases 
\begin{description}
\item[(a) ] 
$\omega= d q, \deg q = 3$
\item[(b)] 
$
\omega= p_1p_2p_3 \cdot\eta,\;  \eta = \lambda_1\frac{dp_1}{p_1}+  \lambda_2 \frac{dp_2}{p_2}+  \lambda_3 \frac{dp_3}{p_3}, \deg p_1 = \deg p_2=\deg p_3=1
$
\item[(c)]
$\omega= p_1p_2 \cdot \eta,\;  \eta = \lambda_1\frac{dp_1}{p_1}+  \lambda_2 \frac{dp_2}{p_2}, \deg p_1 = 2, \deg p_2 = 1$
\item[(d)]
$\omega= p_1p_2 \cdot \eta,\;  \eta = \lambda_1\frac{dp_1}{p_1}+  \lambda_2 \frac{dp_2}{p_2} + d q, \deg p_1 = \deg p_2 = \deg q =1$
\item[(e)]
$\omega= p_1p_2 \cdot \eta,\;  \eta = \lambda_1\frac{dp_1}{p_1}+  \lambda_2 \frac{dp_2}{p_2} + d \frac{q}{p_1}, \deg p_1 = \deg p_2 = \deg q =1$
\item[(f)]
$\omega= p^3 \cdot \eta,\;  \eta = \frac{dp}{p}+ d \frac{q}{p^2},   \deg p = 1, \deg q =2$
\item[(g)]
$\omega= p^2 \cdot \eta,\;  \eta = \frac{dp}{p}+ d \frac{q}{p},   \deg p = 1, \deg q =2$
\item[(h)]
$\omega= p \cdot \eta,\;  \eta = \frac{dp}{p}+ d q,   \deg p = 1, \deg q =2$
\item[(i)]
$\omega= p \cdot \eta,\;  \eta = \frac{dp}{p}+ d q,   \deg p = 2, \deg q =1$
\item[(j)]
$\omega= f g  \cdot \eta,\;  \eta = 3 \frac{df}{f} - 2  \frac{dg}{g},   \deg f = 2, \deg g =3$ .
\end{description}
}
In the first three cases (a), (b), (c), and in the last one (j) the one-form $\omega$ can be written as
\begin{equation}
\label{log}
\omega=f_1...f_s(\Sigma_{i=1}^s \lambda_i\frac{df_i}{f_i}),
\end{equation}
where $f_i$ are  polynomials with suitable complex coefficients. The first integral $f$ of (\ref{log}) is of logarithmic type
$f = f_1^{\lambda_1}...f_s^{\lambda_s}$. 
Following
Movasati \cite{Movasati},  for given positive integers $d_1,...,d_s$, we denote by $\mathcal{L}(d_1,...,d_s)$ the set of polynomial one-forms $\omega_0 $ (\ref{log}), where  $f_i$ are complex polynomials of degree  $d_i$ , $\lambda_i \in \C$,
$i=1,...,s$. The algebraic closure $\overline{\mathcal{L}(d_1,...,d_s)}$ of  $\mathcal{L}(d_1,...,d_s)$ is then an irreducible algebraic subset of the vector space of polynomial one-forms of degree at most $d=\Sigma_{i=1}^s d_i-1$. \emph{It is a remarkable fact, that one-forms of type (d),(e),(f),(g),(h),(i) above are limits of one-forms from the sets (a), (b), (c) (j).} 
This leads to the following simpler formulation of the Dulac's  theorem, which is   implicit in Zoladek \cite[Theorem 1]{Zoladek1},  and explicit in Lins Neto \cite[Theorem 1.1]{lins14}.
\begin{theorem}
\label{dulac}
Let $Q^\C$ be the 12-dimensional complex vector space of quadratic plane differential systems. The algebraic closure of the subset of quadratic systems having 
a Morse critical point is an algebraic subset of $Q^\C$  with irreducible decomposition as follows
$$
\overline{\mathcal{L}(3)}, \overline{\mathcal{L}(2,1)}, \overline{\mathcal{L}(1,1,1)}, \overline{ \mathcal{L}(3,2)}\cap Q^\C .
$$
\end{theorem}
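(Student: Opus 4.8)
The plan is to derive Theorem \ref{dulac} from the twelve-case form of Dulac's theorem stated above, together with the confluence remark asserting that one-forms of types (d)–(i) are limits of one-forms from the families (a), (b), (c), (j). The first step is bookkeeping: identify the four ``generic'' families. Case (a), $\omega = dq$ with $\deg q = 3$, is exactly $\mathcal{L}(3)$; case (b) is $\mathcal{L}(1,1,1)$; case (c) is $\mathcal{L}(2,1)$; and case (j), $\omega = fg\bigl(3\tfrac{df}{f}-2\tfrac{dg}{g}\bigr) = 3g\,df - 2f\,dg$ with $\deg f = 2$, $\deg g = 3$, is $\mathcal{L}(3,2)$. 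Here one must observe that the one-forms of type (j) a priori have degree up to $4$, which is why Theorem \ref{dulac} intersects $\overline{\mathcal{L}(3,2)}$ with the $12$-dimensional space $Q^\C$ of quadratic systems: the intersection records exactly the condition that the degree-$3$ and degree-$4$ parts of $3g\,df - 2f\,dg$ cancel. Each of the four families consists of genuine quadratic vector fields admitting a Darboux/logarithmic first integral $f_1^{\lambda_1}\cdots f_s^{\lambda_s}$ (in the exact case, a polynomial first integral), hence a Morse critical point at a generic point; so the four closures are contained in the algebraic closure $\mathcal{M}$ of the Morse-point locus.

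For the reverse inclusion I would invoke the classification directly: any quadratic system with a Morse critical point lies in one of the cases (a)–(j); by the confluence remark, the ``degenerate'' cases (d)–(i) lie in the closure of $\mathcal{L}(3)\cup\mathcal{L}(1,1,1)\cup\mathcal{L}(2,1)\cup\mathcal{L}(3,2)$, and therefore $\mathcal{M}$ is contained in $\overline{\mathcal{L}(3)}\cup\overline{\mathcal{L}(1,1,1)}\cup\overline{\mathcal{L}(2,1)}\cup\bigl(\overline{\mathcal{L}(3,2)}\cap Q^\C\bigr)$. Combined with the previous paragraph this yields equality as sets; since $\mathcal{M}$ is by construction Zariski closed, it is an equality of algebraic subsets of $Q^\C$.

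It remains to show that the four pieces displayed are irreducible and that the decomposition is irredundant. Irreducibility of $\overline{\mathcal{L}(d_1,\dots,d_s)}$ follows the argument recalled from Movasati: $\mathcal{L}(d_1,\dots,d_s)$ is the image of the irreducible parameter space $\{(f_1,\dots,f_s,\lambda_1,\dots,\lambda_s)\}$ (a product of affine spaces of coefficients) under the polynomial map $(f_i,\lambda_i)\mapsto f_1\cdots f_s\sum_i\lambda_i\,df_i/f_i$, and the closure of the image of an irreducible variety under a morphism is irreducible; for $\overline{\mathcal{L}(3)}$ this is transparent since it is simply the linear subspace $\{dq:\deg q\le 3\}$. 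The one genuinely delicate point is $\overline{\mathcal{L}(3,2)}\cap Q^\C$: intersecting an irreducible variety with a linear subspace need not stay irreducible, so I would argue that here the intersection is cut out scheme-theoretically by the vanishing of the top two homogeneous parts of $3g\,df-2f\,dg$ in a way that keeps the locus of admissible $(f,g)$ irreducible, and push this forward — alternatively one reads off irreducibility from the Cerveau–Lins Neto description of this stratum.

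Finally, to see that none of the four closures is contained in the union of the others I would compare dimensions (counting coefficient parameters modulo the scalings $f_i\mapsto c_i f_i$, $\lambda_i\mapsto s\lambda_i$ and the overall rescaling of $\omega$) and exhibit a generic member of each family lying off the other three: a generic cubic Hamiltonian $dq$ is not a product of a conic or of lines; a generic $\mathcal{L}(2,1)$ integral $p_1^{\lambda_1}p_2^{\lambda_2}$ with $p_1$ an irreducible conic is neither Hamiltonian nor Lotka–Volterra nor of type $(3,2)$; and so on. The main obstacle, and the part I expect to require the full force of Dulac's case analysis and the Cerveau–Lins Neto treatment, is exactly making the confluence remark rigorous — tracking each degeneration (d)–(i) into the closure of one of the four generic strata — together with the irreducibility of $\overline{\mathcal{L}(3,2)}\cap Q^\C$ and the verification that the resulting four components are pairwise incomparable.
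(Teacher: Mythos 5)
Your strategy is the same as the paper's: feed Dulac's twelve cases in, absorb the degenerate cases (d)--(i) into the closures of (a), (b), (c), (j), and get irreducibility from the natural parameterizations. But the two steps you explicitly defer are precisely the content of the paper's proof, so as written the argument has gaps rather than being a complete alternative. First, the ``confluence remark'' is not a quotable fact you can invoke; it is what has to be proved. The paper establishes it by exhibiting explicit $\varepsilon$-families inside the generic strata: for case (d), the forms $\omega_\varepsilon = p_1p_2(1+\varepsilon q)\bigl(\lambda_1\tfrac{dp_1}{p_1}+\lambda_2\tfrac{dp_2}{p_2}+\tfrac1\varepsilon\tfrac{d(1+\varepsilon q)}{1+\varepsilon q}\bigr)$ lie in $\mathcal{L}(1,1,1)$ and tend to $p_1p_2\bigl(\lambda_1\tfrac{dp_1}{p_1}+\lambda_2\tfrac{dp_2}{p_2}+dq\bigr)$ as $\varepsilon\to 0$; for case (e) one replaces the third factor by $p_1+\varepsilon q$ and the third logarithmic term by $\tfrac1\varepsilon\bigl(\tfrac{d(p_1+\varepsilon q)}{p_1+\varepsilon q}-\tfrac{dp_1}{p_1}\bigr)$; cases (f)--(i) are degenerated similarly into $\overline{\mathcal{L}(2,1)}$. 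Without producing such families (and saying into which of the four strata each of (d)--(i) falls), your reverse inclusion is an assertion, not a proof.

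Second, for the irreducibility of $\overline{\mathcal{L}(3,2)}\cap Q^\C$ your proposal (a scheme-theoretic argument about the vanishing of the top homogeneous parts, or an appeal to Cerveau--Lins Neto) is not carried out, and as you yourself note, intersecting an irreducible variety with a linear subspace can destroy irreducibility, so something specific to this stratum is needed. The paper's route is different and concrete: it writes down an explicit parameterization of the $Q_4$ stratum, namely $f_2=x^2+4y+1$, $f_3=\alpha x(x^2+6y)+6y+1$ as in (\ref{q4}), for which the form $3f_3\,df_2-2f_2\,df_3$ is already of degree two; the stratum is then the closure of the image of an irreducible parameter space (the $\alpha$-line together with the affine changes of coordinates and scalings), and irreducibility follows exactly as for the other three components, with no intersection-with-$Q^\C$ argument left to make. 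Your additional check that the four components are pairwise incomparable (dimension counts and generic members) is a reasonable supplement --- the paper's sketch does not spell it out --- but it does not compensate for the two missing constructions above.
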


 The usual terminology for these four components in the real case  is, according to (\ref{strata}) : Hamiltonian $H = \overline{\mathcal{L}(3)} \cap Q^\R$ , reversible $R= \overline{\mathcal{L}(2,1)} \cap Q^\R$, Lotka-Volterra $LV = \overline{\mathcal{L}(1,1,1)} \cap Q^\R$ and co-dimension four $Q_4= \overline{ \mathcal{L}(3,2)}\cap Q^\R$ component of the center set, respectively.
Another terminology is introduced in  \cite[section 13]{ilya08}. \vskip 1pt

Some more explanation should be given about $Q_4$. In that case, associated with $\mid B\mid=\mid C\mid$ there is a parameter $\alpha={\rm cos}(\xi/2)$ so that:

\begin{eqnarray}
\label{q4}
\begin{aligned}
f_2=x^2+4y+1 \\
f_3={\alpha}x(x^2+6y)+6y+1,
\end{aligned}
\end{eqnarray}
and it can be checked that the form $\omega_0=3f_3df_2-2f_2df_3$ is of degree $2$.

\begin{proof}[Sketch of the proof of Theorem \ref{dulac}]
Let $X$ be a quadratic differential system with associated one-form $\omega$.  
The cases (a), (b), (c), (j) correspond obviously to $\omega$ in 
$$\mathcal{L}(3), \mathcal{L}(1,1,1), \mathcal{L}(2,1), \mathcal{L}(3,2)\cap Q^\C .$$
When the parameter $\varepsilon$ tends to $0$, the one-form
$$
\omega_\varepsilon = p_1p_2 (1+ \varepsilon q)( \lambda_1\frac{dp_1}{p_1}+  \lambda_2 \frac{dp_2}{p_2} +\frac{1}{\varepsilon} \frac {d(1+ \varepsilon q)}{1+ \varepsilon q}) \in \mathcal{L}(1,1,1) 
$$
tends to
$$
\omega_0= p_1p_2 \cdot \eta_0,\;  \eta_0 = \lambda_1\frac{dp_1}{p_1}+  \lambda_2 \frac{dp_2}{p_2} + d q
$$
which shows that in the case (d) the one-forms $\omega$ belong to $\overline{\mathcal{L}(1,1,1)}$. Similarly, the one-form
$
\omega_\varepsilon= p_1p_2 (p_1 + \varepsilon q)  \cdot \eta_\varepsilon \in \mathcal{L}(1,1,1)
$
where
$$
 \eta_\varepsilon = \lambda_1\frac{dp_1}{p_1}+  \lambda_2 \frac{dp_2}{p_2} + \frac1\varepsilon (\frac{d(p_1+\varepsilon q)}{ p_1+\varepsilon q} - \frac{dp_1}{p_1} )
$$
tends to the form 
$$\omega_0= p_1p_2 \cdot \eta_0,\;  \eta_0 = \lambda_1\frac{dp_1}{p_1}+  \lambda_2 \frac{dp_2}{p_2} + d \frac{q}{p_1}, \deg p_1 = \deg p_2 = \deg q =1.$$
This shows that in the case $(e)$ the one forms $\omega$ belong to $\overline{\mathcal{L}(1,1,1)}$. The remaining cases (f)-(i) are treated in a similar way, and they all belong to $\overline{\mathcal{L}(1,2)}$.

Finally, the irreducibility of the algebraic sets $\overline{\mathcal{L}(3)}, \overline{\mathcal{L}(2,1)}, \overline{\mathcal{L}(1,1,1)}$ follows from the fact that they are naturally parameterised by the coefficients of the polynomials $p_i$ and the exponents $\lambda_j$. The irreducibility of $\overline{ \mathcal{L}(3,2)}\cap Q^\C$ follows from the parameterisation (\ref{q4}).
\end{proof}

  \begin{acknowledgements}
  The authors thank the referee for the valuable remarks. JPF was
supported by the SJTU-INS Research Project for Visiting Scholar.
LG  has been partially supported by the Grant No DN 02-5 of the Bulgarian Fund “Scientific Research".
DX is supported by the NSFC grants (No. 11431008  $\&$No.11371248).

  \end{acknowledgements}

\end{document}